\documentclass{amsart}

\title{The $\Sigma_1$-definable universal finite sequence}

\author{Joel David Hamkins}
 \address[Joel David Hamkins]
          {Professor of Logic, University of Oxford, and
           Sir Peter Strawson Fellow, University College, High Street, Oxford OX1 4BH, UK}
 \email{joeldavid.hamkins@philosophy.ox.ac.uk}
 \urladdr{http://jdh.hamkins.org}

\DeclareRobustCommand{\okina}{%
  \raisebox{\dimexpr\fontcharht\font`A-\height}{%
    \scalebox{0.8}{`}%
  }%
}
\author{Kameryn J. Williams}
\address[Kameryn J. Williams]{
University of Hawai\okina{}i at M\=anoa \\
Department of Mathematics \\
2565 McCarthy Mall, Keller 401A \\
Honolulu, HI  96822 \\
USA}
\email{kamerynw@hawaii.edu}
\urladdr{http://kamerynjw.net}

\thanks{We thank the anonymous referee for their numerous helpful comments.\\
Commentary can be made about this article on the first author's blog at \href{http://jdh.hamkins.org/the-universal-finite-sequence}{http://jdh.hamkins.org/the-universal-finite-sequence}.}
\subjclass[2010]{03H05, 03E40, 03E45}

\usepackage{latexsym,amsfonts,amsmath,amssymb,mathrsfs}
\usepackage{relsize}
\usepackage[rgb,dvipsnames]{xcolor}
\usepackage{tikz}
\usetikzlibrary{arrows,arrows.meta,petri,topaths,positioning,shapes,shapes.misc,patterns,calc,decorations.pathreplacing,hobby}
\usepackage[shortlabels]{enumitem} 
\usepackage[pdfauthor={Joel David Hamkins and Kameryn J Williams},
    pdftitle={The Sigma1-definable universal finite sequence},
    hidelinks]{hyperref}

\newtheorem*{main}{Main~Theorem}
%
%
%
%
\newtheorem{theorem}{Theorem}

\newtheorem*{maintheorem*}{Main Theorem}
\newtheorem*{maintheorems*}{Main Theorems}
\newtheorem{corollary}[theorem]{Corollary}
\newtheorem*{corollary*}{Corollary}
\newtheorem*{corollaries*}{Corollaries}

\newtheorem{lemma}[theorem]{Lemma}

\newtheorem{question}[theorem]{Question}
\newtheorem*{question*}{Question}

\newtheorem*{questions*}{Questions}
\newtheorem*{mainquestion*}{Main Question} 
\newtheorem*{openquestion*}{Open Question} 

\newtheorem{definition}[theorem]{Definition}

\newcommand{\QED}{\end{proof}}

\def\proclaim[#1]{{\bf #1}}
\def\BF#1.{{\bf #1.}}

\def\says#1:#2\par{\item[#1] #2\par}

%
%

\newcommand{\Godel}{G\"odel}

\newcommand{\Levy}{L\'{e}vy}
\newcommand{\Lowenheim}{L\"owenheim}

%
%




%
%

%
%

%
%
\makeatletter
\newcommand{\dotminus}{\mathbin{\text{\@dotminus}}}
\newcommand{\@dotminus}{%
  \ooalign{\hidewidth\raise1ex\hbox{.}\hidewidth\cr$\m@th-$\cr}%
}
\makeatother

\newcommand{\of}{\subseteq}

\newcommand{\restrict}{\upharpoonright} 
\newcommand{\satisfies}{\models}


\DeclareMathOperator{\possible}{\text{\tikz[scale=.6ex/1cm,baseline=-.6ex,rotate=45,line width=.1ex]{\draw (-1,-1) rectangle (1,1);}}}
\DeclareMathOperator{\necessary}{\text{\tikz[scale=.6ex/1cm,baseline=-.6ex,line width=.1ex]{\draw (-1,-1) rectangle (1,1);}}}



%

\newcommand{\theoryf}[1]{{\rm #1}}


\newcommand{\smalllt}{\mathrel{\mathchoice{\raise2pt\hbox{$\scriptstyle<$}}{\raise1pt\hbox{$\scriptstyle<$}}{\raise0pt\hbox{$\scriptscriptstyle<$}}{\scriptscriptstyle<}}}
\newcommand{\smallleq}{\mathrel{\mathchoice{\raise2pt\hbox{$\scriptstyle\leq$}}{\raise1pt\hbox{$\scriptstyle\leq$}}{\raise1pt\hbox{$\scriptscriptstyle\leq$}}{\scriptscriptstyle\leq}}}

\newcommand{\boolval}[1]{\mathopen{\lbrack\!\lbrack}\,#1\,\mathclose{\rbrack\!\rbrack}}
\def\[#1]{\boolval{#1}}
\newbox\gnBoxA
\newdimen\gnCornerHgt
\setbox\gnBoxA=\hbox{\tiny$\ulcorner$}
\global\gnCornerHgt=\ht\gnBoxA
\newdimen\gnArgHgt
\def\gcode #1{%
\setbox\gnBoxA=\hbox{$#1$}%
\gnArgHgt=\ht\gnBoxA%
\ifnum     \gnArgHgt<\gnCornerHgt \gnArgHgt=0pt%
\else \advance \gnArgHgt by -\gnCornerHgt%
\fi \raise\gnArgHgt\hbox{\tiny$\ulcorner$} \box\gnBoxA %
\raise\gnArgHgt\hbox{\tiny$\urcorner$}}
\newcommand{\UnderTilde}[1]{{\setbox1=\hbox{$#1$}\baselineskip=0pt\vtop{\hbox{$#1$}\hbox to\wd1{\hfil$\sim$\hfil}}}{}}
\newcommand{\Undertilde}[1]{{\setbox1=\hbox{$#1$}\baselineskip=0pt\vtop{\hbox{$#1$}\hbox to\wd1{\hfil$\scriptstyle\sim$\hfil}}}{}}
\newcommand{\undertilde}[1]{{\setbox1=\hbox{$#1$}\baselineskip=0pt\vtop{\hbox{$#1$}\hbox to\wd1{\hfil$\scriptscriptstyle\sim$\hfil}}}{}}
\newcommand{\UnderdTilde}[1]{{\setbox1=\hbox{$#1$}\baselineskip=0pt\vtop{\hbox{$#1$}\hbox to\wd1{\hfil$\approx$\hfil}}}{}}
\newcommand{\Underdtilde}[1]{{\setbox1=\hbox{$#1$}\baselineskip=0pt\vtop{\hbox{$#1$}\hbox to\wd1{\hfil\scriptsize$\approx$\hfil}}}{}}

\renewcommand{\implies}{\mathrel{\rightarrow}}

\renewcommand{\iff}{\mathrel{\leftrightarrow}}

\def\<#1>{\left\langle#1\right\rangle}

\newcommand{\Ord}{\mathord{{\rm Ord}}}


\newcommand\ACA{{\rm ACA}}

\newcommand{\ZFC}{{\rm ZFC}}
\newcommand{\ZF}{{\rm ZF}}

\newcommand{\ZFCm}{\ZFC^-}

\newcommand{\KP}{{\rm KP}}

%
%

\newcommand{\cell}[1]{\boxit{\hbox to 17pt{\strut\hfil$#1$\hfil}}}
\newcommand{\head}[2]{\lower2pt\vbox{\hbox{\strut\footnotesize\it\hskip3pt#2}\boxit{\cell#1}}}
\newcommand{\boxit}[1]{\setbox4=\hbox{\kern2pt#1\kern2pt}\hbox{\vrule\vbox{\hrule\kern2pt\box4\kern2pt\hrule}\vrule}}
\newcommand{\Col}[3]{\hbox{\vbox{\baselineskip=0pt\parskip=0pt\cell#1\cell#2\cell#3}}}
\newcommand{\tapenames}{\raise 5pt\vbox to .7in{\hbox to .8in{\it\hfill input: \strut}\vfill\hbox to
.8in{\it\hfill scratch: \strut}\vfill\hbox to .8in{\it\hfill output: \strut}}}
\newcommand{\Head}[4]{\lower2pt\vbox{\hbox to25pt{\strut\footnotesize\it\hfill#4\hfill}\boxit{\Col#1#2#3}}}
\newcommand{\Dots}{\raise 5pt\vbox to .7in{\hbox{\ $\cdots$\strut}\vfill\hbox{\ $\cdots$\strut}\vfill\hbox{\
$\cdots$\strut}}}
%
%
%
%
%
\hyphenation{su-per-com-pact-ness}
\hyphenation{La-ver}
\hyphenation{anti-ci-pat-ing}


%
\newcommand{\ZFm}{\ZF^{-}}
\newcommand\ZFbar{\overline\ZF}

\begin{document}

\begin{abstract}
We introduce the $\Sigma_1$-definable universal finite sequence and prove that it exhibits the universal extension property amongst the countable models of set theory under end-extension. That is, (i) the sequence is $\Sigma_1$-definable and provably finite; (ii) the sequence is empty in transitive models; and (iii) if $M$ is a countable model of set theory in which the sequence is $s$ and $t$ is any finite extension of $s$ in this model, then there is an end-extension of $M$ to a model in which the sequence is $t$. Our proof method grows out of a new infinitary-logic-free proof of the Barwise extension theorem, by which any countable model of set theory is end-extended to a model of $V=L$ or indeed any theory true in a suitable submodel of the original model. The main theorem settles the modal logic of end-extensional potentialism, showing that the potentialist validities of the models of set theory under end-extensions are exactly the assertions of \theoryf{S4}. Finally, we introduce the end-extensional maximality principle, which asserts that every possibly necessary sentence is already true, and show that every countable model extends to a model satisfying it.
\end{abstract}

\maketitle

\section{Introduction}\label{Section.Introduction}

We provide a $\Sigma_1$-definition in set theory of a finite sequence and prove that it exhibits a universal extension property amongst the countable models of set theory.

\begin{main}
For any computably axiomatizable theory $\ZFbar$ extending \ZF, there is a $\Sigma_1$-definable finite sequence
$$a_0,\ a_1,\ \ldots,\ a_n$$
with the following properties:
\begin{enumerate}
\item $\ZF$ proves that the sequence is finite.
\item In any transitive model $M$ of $\ZFbar$, the sequence is empty.
\item If $M$ is a countable model of $\ZFbar$ in which the sequence is $s$ and $t \in M$ is any finite sequence (in the sense of $M$) extending $s$, then there is a covering end-extension of $M$ to a model $N \models \ZFbar$ in which the sequence is exactly $t$, and in which every set of $M$ is countable.
\item Indeed, for statements (2) and (3), it suffices merely that $M\satisfies\ZF$ and end-extends a submodel $W\satisfies \ZFbar$ of height at least $(\omega_1^L)^M$.
\end{enumerate}
$$\begin{tikzpicture}[xscale=.8,scale=.5]
 \draw[thick] (0,0) -- (-2,4) -- node[below,scale=.7] {$M$} (2,4) -- cycle;
 \draw[thick] (0,0) to[out=105,in=-130] (0,1.5) node[circle,fill=black,scale=.2,label={[label distance=-5pt]below right:$s$}] {};
 \draw[thick,dotted,blue!75!black] (0,1.5) to[out=50,in=-85] (.25,2.85) node[circle,fill=blue!75!black,scale=.2,label={[label distance=-4pt]below left:$t$}] {};
\begin{scope}[shift={(10,0)}]
 \draw[thick,opacity=.35] (0,0) -- (-2,4) -- node[below,scale=.7] {$M$} (2,4) -- cycle;
 \draw[thick,blue!75!black] (0,0) -- (-3.5,5) -- node[below,scale=.7] {$N$} (3.5,5) -- cycle;
 \draw[thick,blue!75!black] (0,0) to[out=105,in=-130] (0,1.5) node[circle,fill opacity=.35, fill=black,scale=.2,label={[opacity=.35,label distance=-5pt]below right:$s$}] {} to[out=50,in=-85] (.25,2.85) node[circle,fill=blue!75!black,scale=.2,label={[label distance=-4pt]below left:$t$}] {};
\end{scope}
\end{tikzpicture}$$
\end{main}

The $\Sigma_1$-definable universal finite sequence is a sister to the universal finite set of Hamkins and Woodin~\cite{HamkinsWoodin:The-universal-finite-set}, which provided a $\Sigma_2$-definable finite set having a universal extension property with respect to the models of set theory under rank-extensions (also known as top-extensions), namely, those for which the new sets of the extension all have higher rank than the old sets; thus, the smaller model is determined by a cut in the $V_\alpha$-hierarchy of the larger model. The Hamkins--Woodin universal set naturally used a $\Sigma_2$-definition, as it is precisely the $\Sigma_2$-assertions that are witnessed in the rank initial segments $V_\alpha$ of the universe.

In the main result of this article, in contrast, we seek universality with respect to arbitrary end-extensions of models of set theory, and since it is precisely the $\Sigma_1$-assertions that are witnessed in $\in$-initial segments of the universe, we seek accordingly a $\Sigma_1$-definable universal sequence. In short, we seek to undertake a $\Sigma_1$ analogue of the $\Sigma_2$ analysis of~\cite{HamkinsWoodin:The-universal-finite-set}, working with end-extensions of models of set theory rather than rank extensions.

Both the $\Sigma_1$-definable universal finite sequence of this article and the $\Sigma_2$-definable universal finite set of~\cite{HamkinsWoodin:The-universal-finite-set} should be seen as set-theoretic analogues of Woodin's universal algorithm, a Turing machine program that provably enumerates a finite sequence of numbers exhibiting a universal extension property with respect to the models of arithmetic under end-extensions~\cite{Woodin2011:A-potential-subtlety-concerning-the-distinction-between-determinism-and-nondeterminism, Hamkins:The-modal-logic-of-arithmetic-potentialism, BlanckEnayat2017:Marginalia-on-a-theorem-of-Woodin, Blanck2017:Dissertation:Contributions-to-the-metamathematics-of-arithmetic}.

To clarify terms, an \emph{end-extension} of a model of set theory $\<M,\in^M>$ is another model $\<N,\in^N>$, such that the first is a $\in$-initial substructure of the second. That is, $M\of N$ and $\mathord{\in^M}=\mathord{\in^N\restrict M}$, but further, the new model does not add new elements to old sets; more precisely: if $a\in^N b\in M$, then $a\in M$ and hence $a\in^M b$. Such an end-extension is a \emph{covering} end-extension if the larger model has a covering set, an object $m\in N$ such that $a\in^N m$ for all $a\in M$. In this article, unless we say otherwise all extensions will be proper extensions. That is, we wish to exclude the trivial extension $M \subseteq M$.

Set theory, of course, overflows with instances of end-extensions and covering end-extensions. The rank-initial segments $V_\alpha$ end-extend to their higher instances $V_\beta$, which also cover them, when $\alpha<\beta$; similarly, the hierarchy of the constructible universe $L_\alpha\subseteq L_\beta$ are covering end-extensions; every transitive set end-extends to all its supersets. The set-theoretic universe $V$ is an end-extension of the constructible universe $L$ and every forcing extension $M[G]$ is an end-extension of its ground model $M$, though neither of these extensions are covering. In particular, one should not confuse end-extensions with rank-extensions, where all the new sets come in only at higher ranks. Observe, however, that every $\Sigma_2$-elementary end-extension is a rank-extension and that every rank-extension modeling \ZF\ is a covering end-extension.

In the proof of the main theorem, we shall make key use of the fact that every countable model of set theory has an elementary end-extension.

\begin{theorem}[Keisler--Morley~\cite{KeislerMorley1968:ElementaryExtensionsOfModelsOfSetTheory}]
Every countable model of \ZF\ has an elementary end-extension.
\end{theorem}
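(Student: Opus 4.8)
The plan is to build the extension by a Henkin-style construction, realized cleanly through the Omitting Types Theorem, exploiting that $M$, being countable, has a countable language. I would work in the language $\mathcal{L}^+$ of set theory augmented with a constant $c_a$ for each $a\in M$ together with one fresh constant $d$, and let $T$ consist of the elementary diagram of $M$ in $\mathcal{L}^+$ together with the assertions that $d$ is an ordinal and $c_\alpha\in d$ for every ordinal $\alpha$ of $M$. Since $\Ord^M$ has no largest element, every finite fragment of these requirements on $d$ is realized in $M$, so $T$ is consistent by compactness; the role of $d$ is solely to force the resulting model to be a proper extension by furnishing a new ordinal above all the old ones. The end-extension property I would encode as a family of types to be omitted: for each $a\in M$ set
$$p_a(x)\ =\ \{\,x\in c_a\,\}\ \cup\ \{\,x\neq c_b : b\in^M a\,\},$$
the type asserting that $x$ is a member of the old set $a$ differing from every old member of $a$ --- in other words, that $x$ is a \emph{new} element of $a$.

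Granting that each of these countably many types $p_a$ is non-principal over $T$, the Omitting Types Theorem delivers a countable $N\satisfies T$ omitting every $p_a$. The map $a\mapsto c_a^N$ is then an elementary embedding of $M$ into $N$, since $N$ models the elementary diagram, and identifying $M$ with its image makes $N$ an elementary extension. Omitting each $p_a$ says precisely that no old set acquires a new member, so $N$ is an end-extension; and the new ordinal $d^N$, lying above every old ordinal, witnesses $N\neq M$, so the extension is proper. Everything thus reduces to the non-principality of the types $p_a$.

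This last point is the heart of the matter, and it is exactly where the hypothesis $M\satisfies\ZF$ is used. Suppose toward a contradiction that some formula $\chi(x,d)$ (with old parameters suppressed) isolated $p_a$ over $T$. Reading off finite fragments of the two halves of the isolation and interpreting $d$ as an ordinal of $M$ above the finitely many old ordinals appearing, one finds that for every sufficiently large $\delta\in\Ord^M$ the collection $A_\delta=\{\,x\in a : M\satisfies\chi(x,\delta)\,\}$ is, by Separation, a subset of $a$ lying in $M$, and moreover captures every $\chi(\cdot,\delta)$-witness. The assignment $\delta\mapsto A_\delta$ is then a definable map from a final segment of $\Ord^M$ into the set $\mathcal{P}(a)$; since $\mathcal{P}(a)$ is a set, Replacement yields the dichotomy that either some fixed nonempty $A^\ast$ occurs at cofinally many $\delta$, or else $A_\delta=\emptyset$ for all sufficiently large $\delta$. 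In the first case, fixing $b^\ast\in^M a$ with $b^\ast\in A^\ast$ gives $M\satisfies\chi(b^\ast,\delta)$ for cofinally many $\delta$, contradicting the isolation clause $T\proves\chi(x,d)\to x\neq c_{b^\ast}$, which forces $M\satisfies\neg\chi(b^\ast,\delta)$ for all large $\delta$. In the second case, the consistency of $\exists x\,\chi(x,d)$ with $T$ reflects into $M$ --- via any model $P\satisfies T$ with $d^P$ above all old ordinals, in which $\exists y(\gamma\in y\wedge y\in\Ord\wedge\exists x\,\chi(x,y))$ holds for every old $\gamma$ --- to produce arbitrarily large $\delta$ with $A_\delta\neq\emptyset$, again a contradiction. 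Hence no such $\chi$ exists and each $p_a$ is non-principal. The main obstacle, then, is precisely this non-principality verification: seeing that a generic ordinal $d$ placed above all old ordinals cannot be used to define a genuinely new member of a fixed old set, which is where Separation and Replacement in $M$ do the essential work.
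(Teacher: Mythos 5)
Your proof is correct and follows exactly the route the paper indicates for the general \ZF\ case: the omitting types theorem applied to the countably many types $p_a$ asserting that the old set $a$ acquires a new element, with Separation and Replacement in $M$ doing the work in the non-principality verification. The paper gives only a one-sentence sketch of this approach (and mentions a definable-ultrapower alternative for \ZFC), so your argument is a faithful and correctly executed elaboration of the same method.
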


For models of \ZFC, this can be proved by means of a suitable definable ultrapower, in the language of set theory augmented with a generic well-order. The general \ZF\ case can be proved using the omitting types theorem in order to ensure the end-extension property. The result is known not to generalize to uncountable models of set theory. For example, if $\kappa$ is the least inaccessible cardinal, then $\<V_\kappa,\in>$ has no elementary end-extension. 

To prove the main theorem we will make use of some reflection and absoluteness facts, which we state here precisely.
The first is the reflection principle: for each formula $\varphi(x)$, allowing parameters, and each set $a$ there is a transitive set $m \ni a$ so that $\varphi(x)$ reflects from $V$ to $m$. We will also use the \Levy\ reflection theorem, asserting that $\Sigma_1$-assertions in set theory are absolute to $L_{\omega_1^L}$, and the Shoenfield absoluteness theorem, asserting that $\Sigma^1_2$-assertions about reals are absolute between models with the same countable ordinals.

Finally, we will need the \Godel--Carnap fixed-point lemma for our definition of the $\Sigma_1$-definable universal finite sequence. We give here a formulation of this lemma appropriate for our context.

\begin{lemma}[\Godel--Carnap fixed-point lemma]
Let $\varphi(x,\bar y)$ be a formula in the language of set theory. Then there is a formula $\psi(\bar y)$ so that $\KP$ proves $\forall \bar y\ \psi(\bar y) \iff \varphi(\gcode{\psi},\bar y)$.
\end{lemma}

\section{A new proof of the Barwise extension theorem}\label{Section.A-new-proof-of-the-Barwise-extension-theorem}

Our proof of the main theorem grows out of ideas in a certain new proof of the classical Barwise extension theorem, which we shall provide in this section as a warm-up to the main theorem.

\begin{theorem}[Barwise extension theorem~\cite{Barwise1971:InfinitaryMethodsInTheModelTheoryOfSetTheoryLC69}] \label{Theorem.Barwise-extension}
 Every countable model of \ZF\ has an end-extension to a model of $\ZFC+V=L$.
\end{theorem}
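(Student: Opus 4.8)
The plan is to give a purely first-order, infinitary-logic-free construction of the end-extension, using the omitting types theorem in place of Barwise compactness; the target theory is $\ZFbar$, which for the Barwise statement is $\ZFC+V=L$. Let $M$ be a countable model of $\ZF$. Introduce a fresh constant $\underline a$ for every element $a\in M$ and consider the first-order theory $T=\ZFbar\cup\Delta(M)$ in this expanded language, where $\Delta(M)$ is the atomic diagram of $M$: it contains $\underline a\in\underline b$ whenever $M\satisfies a\in b$, the negation $\underline a\notin\underline b$ otherwise, and $\underline a\neq\underline{a'}$ for distinct $a,a'$. Any model of $T$ carries a canonical $\in$-embedding $a\mapsto\underline a$ of $M$; to force this embedding to be an end-extension I would, for each $b\in M$, omit the type
$$p_b(x)=\{x\in\underline b\}\cup\{x\neq\underline a\mid a\in^M b\}.$$
Omitting $p_b$ says exactly that every element of $\underline b$ equals some $\underline a$ with $a\in^M b$, which together with $\Delta(M)$ pins the extension of $\underline b$ down to $\{\underline a : a\in^M b\}$, i.e. no new elements are added to old sets.

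First I would check that $T$ is consistent. By compactness it suffices to realize each finite fragment, and such a fragment mentions only finitely many constants $\underline{a_1},\dots,\underline{a_k}$ together with the finite pattern of memberships, non-memberships, and distinctness among them. Because $M$ models Foundation this pattern is acyclic, hence realizable by finitely many distinct hereditarily finite sets inside $\HF$, which sits inside every model of $\ZFbar$; so $T$ is consistent. Since $M$ is countable, the language and the family $\{p_b : b\in M\}$ are countable, so the omitting types theorem applies as soon as each $p_b$ is shown to be non-principal over $T$, and the resulting countable model $N\satisfies\ZFbar$ then end-extends $M$ via the interpreted constants, which is the theorem.

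The hard part will be the non-principality of the types $p_b$, and this is where the real content sits. For $M$-finite $b$ the type is harmless, since a single diagram sentence already fixes the extension of $\underline b$; the genuine issue concerns $M$-infinite sets $b$, and especially sets $M$ regards as non-constructible, for in the end-extension $N\satisfies V=L$ such a $b$ must become constructible, necessarily at a stage above $\Ord^M$. A formula $\phi(x)$ isolating $p_b$ would have to prove over $T$ that some element of $\underline b$ differs from each of the infinitely many $\underline a$ with $a\in^M b$; I would argue no such $\phi$ exists, since any finite fragment of $T$ can be realized in a model of $\ZFbar$ that identifies the putative witness with one of the old elements, exploiting the freedom to (re)construct the sets of $M$ at new, nonstandard ordinal stages of the $L$-hierarchy of $N$. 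This reconciliation of $M\satisfies V\neq L$ with $N\satisfies V=L$ forces $N$ to be ill-founded above $\Ord^M$, which is precisely the phenomenon the original argument encoded through admissible fragments of infinitary logic, produced here instead by the omitting types construction. For the stated generalization to an arbitrary computably axiomatizable $\ZFbar$ true in a submodel $W$, the same scheme applies with $\Delta(M)$ over $\ZFbar$, and the height hypothesis on $W$, at least $(\omega_1^L)^M$, is what guarantees a rich enough reservoir of codes to reinterpret the sets of $M$ and thereby to verify non-principality.
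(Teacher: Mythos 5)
Your architecture---atomic diagram plus omitting the types $p_b$---is the standard first-order reformulation of ``end-extension,'' but the entire content of the Barwise theorem is concentrated in the claim that these types can be omitted, and that claim is not established; indeed, in the form the omitting types theorem requires, it is false as you have set things up. Concretely, take $M$ transitive with a set of natural numbers $r\in M$ and consider $\phi(x)\equiv x\in\underline r\wedge x\notin\underline\omega$. The atomic diagram records $\underline n\in\underline\omega$ for each $n\in r$ but does \emph{not} record $\underline r\of\underline\omega$, so $T+\exists x\,\phi(x)$ is consistent (pad the finite $\HF$ realizations with a junk element of $\underline r$ lying outside $\underline\omega$), while $T\proves\phi(x)\rightarrow x\in\underline r$ and $T\proves\phi(x)\rightarrow x\neq\underline n$ for every $n\in r$. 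Thus $\phi$ isolates $p_r$ over $T$, and the omitting types theorem simply does not apply.

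One can try to repair this by enriching $T$ (say, to the $\Delta_0$-diagram of $M$), but then you face the real difficulty, which your sketch does not engage: for \emph{every} formula $\phi(x)$ consistent with $T$ you must produce either a model of $T+\exists x(\phi(x)\wedge x\notin\underline b)$ or a model of $T+\phi(\underline a)$ for some $a\in^M b$, and this is a global consistency question about an arbitrary first-order $\phi$ over the full infinite diagram of $M$ together with a theory ($\ZFC+V=L$) that $M$ itself need not satisfy. Your justification---that finite fragments can be ``realized in a model of $\ZFbar$ that identifies the putative witness with one of the old elements''---is a restatement of the desired conclusion, not an argument; the isolating formulas one must defeat are not local membership patterns but arbitrary definitions exploiting $V=L$ and the $L$-order. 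This is precisely where Barwise needed admissible infinitary logic, and where the paper's proof instead uses a genuinely different toolkit: representing the relevant $\Pi^1_1$ non-extendibility assertions by well-founded trees and their ordinal ranks, transferring them by L\'evy and Shoenfield absoluteness, and passing to a Keisler--Morley elementary end-extension and a collapse forcing extension, with separate treatments of $\omega$-nonstandard models (overspill on a nonstandard fragment $\ZFC_k$) and $\omega$-standard models (a case split on whether the tree rank is countable in $L^{M^+}$). None of that machinery, nor any substitute for it, appears in your proposal, so the proof has a genuine gap at its central step. (The paper does invoke the omitting types theorem, but only inside the Keisler--Morley theorem, where one works over the full \emph{elementary} diagram of $M$ and seeks an extension of the \emph{same} theory---a materially easier non-isolation problem than the one you would need to solve.)
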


The Barwise extension theorem is simultaneously (i) a technical culmination of the pioneering methods of Barwise in admissible set theory and infinitary logic, including the Barwise compactness and completeness theorems and the admissible cover, but also (ii) one of those rare mathematical theorems that is saturated with significance for the philosophy of mathematics and particularly the philosophy of set theory. The first author discussed these philosophical aspects at length in~\cite{Hamkins2014:MultiverseOnVeqL}.

The new development here regarding this theorem is a new direct proof due to Hamkins, with the interesting feature---downright surprising in light of (i) above---that it makes no use of Barwise compactness and indeed no use of infinitary logic at all. Instead, the proof uses only classical methods of descriptive set theory, namely, the representation of $\Pi^1_1$-definable sets with well-founded trees, the \Levy{} and Shoenfield absoluteness theorems, the reflection theorem, and the Keisler--Morley theorem on elementary end-extensions of countable models of set theory.

\begin{proof}[Proof (Hamkins~\cite{Hamkins.blog2018:A-new-proof-of-the-barwise-extension-theorem})]
Suppose that $M$ is a countable model of $\ZF$ set theory. Consider first the easier case, where $M$ is $\omega$-nonstandard. For any standard natural number $k$, the reflection theorem ensures that there are arbitrarily high $L_\alpha^M$ satisfying the finite theory fragment $\ZFC_k$, and so every countable transitive set $m\in L^M$ has an end-extension to a model of $\ZFC_k+V=L$. By overspill, there must be some nonstandard $k$ for which $L^M$ thinks that every countable transitive set $m$ has an end-extension to a countable model of $\ZFC_k+V=L$. This is a $\Pi^1_2$-statement about $k$, which will therefore also be true in $M$, by the Shoenfield absoluteness theorem. By the Keisler--Morley theorem, $M$ has an elementary end-extension $M^+$. Let $\theta$ be a new ordinal on top of $M$, and let $M^+[G]$ be a forcing extension in which $m=V_\theta^{M^+}$ becomes countable. Since the $\Pi^1_2$-statement transfers to $M^+$ and then to $M^+[G]$, there is in $M^+[G]$ an end-extension of $\langle m,\in^{M^+}\rangle$ to a model $\langle N,\in^N\rangle$ that $M^+[G]$ thinks satisfies $\ZFC_k+V=L$. Since $k$ is nonstandard, this theory includes all of the actual $\ZFC$ axioms, and since $m$ end-extends $M$, we have found an end-extension of $M$ to a model of $\ZFC+V=L$, as desired.

Consider now the harder case, where $M$ is $\omega$-standard. Let $M^+$ be an elementary end-extension of $M$, and consider $m=V_\theta^{M^+}$, where $\theta$ is a new ordinal above $M$. If $\langle m,\in^{M^+}\rangle$ has an end-extension to a model of $\ZFC+V=L$, then we're done, since such a model would also end-extend $M$. So assume toward contradiction that there is no such end-extension of $m$. Let $M^+[G]$ be a forcing extension in which $m$ has become countable. The assertion that $m$ has no end-extension to a model of $\ZFC+V=L$ is actually true and hence true in $M^+[G]$. This is a $\Pi^1_1$-assertion there about the real coding $m$. Every such assertion has a canonically associated tree, which is well-founded exactly when the statement is true. Since the statement is true in $M^+[G]$, this tree has some countable rank $\lambda$ there. Since these models have the standard $\omega$, the tree associated with the statement is the same for us as inside the model, and since the statement is actually true, the tree is actually well-founded. So the rank $\lambda$ comes from the well-founded part of the model.

If $\lambda$ happens to be countable in $L^{M^+}$, then consider the assertion, ``there is a countable transitive set, such that the assertion that it has no end-extension to a model of $\ZFC+V=L$ has rank $\lambda$.'' This is a $\Sigma_1$-assertion, since it is witnessed by the countable transitive set and the ranking function of the tree associated with the non-extension assertion. Since the parameters are countable in $L^{M^+}$, it follows by \Levy\ reflection that the statement is true in $L^{M^+}$. So $L^{M^+}$ has a countable transitive set, such that the assertion that it has no end-extension to a model of $\ZFC+V=L$ has rank $\lambda$. But since $\lambda$ is actually well-founded, the statement would have to be actually true; but it isn't, since $L^{M^+}$ itself is such an extension, a contradiction.

So we may assume $\lambda$ is uncountable in $L^{M^+}$. In this case, since $\lambda$ was actually well-ordered, it follows from the fact that $M \prec M^+$ that $L^M$ is well-founded beyond its $\omega_1$. Consider the statement ``there is a countable transitive set having no end-extension to a model of $\ZFC+V=L$.'' This is a $\Sigma^1_2$-sentence, which is true in $M^+[G]$ by our assumption about $m$, and so by Shoenfield absoluteness, it is true in $L^{M^+}$ and, because $M \prec M^+$, hence also true in $L^M$. So $L^M$ thinks there is a countable transitive set $b$ having no end-extension to a model of $\ZFC+V=L$. This is a $\Pi^1_1$-assertion about $b$, whose truth is witnessed in $L^M$ by a ranking of the associated tree. Since this rank would be countable in $L^M$ and this model is well-founded up to its $\omega_1$, the tree must be actually well-founded. But this is impossible, since it is not actually true that $b$ has no such end-extension, since $L^M$ itself is such an end-extension of $b$. Contradiction.
\end{proof}

Observe that if one desires to have a covering end-extension this is easily obtained by using the Keisler--Morley theorem to extend the end-extension further to an elementary end-extension.

We should like to note conversely that the Barwise extension theorem is an immediate consequence of our main theorem, which can be seen as a generalization of it. Specifically, the Barwise theorem follows from statement (4) of the main theorem, simply by taking $\ZFbar$ to be the theory $\ZFC+V=L$. Every model of \ZF\ has an inner model of $\ZFC+V=L$, and so the main theorem shows that not only does every countable model of \ZF\ have an end-extension to a model of $\ZFC+V=L$, but it has such an extension in which the original model is covered, making every element of it countable, and for which we may also control the universal sequence however we desire.

Indeed, the main theorem provides an array of strengthenings of the Barwise extension theorem. For example, Corollary~\ref{Corollary.Applications-of-main-theorem} shows that every countable model of set theory with a measurable cardinal has a covering end-extension to a model of $\ZFC+V=L[\mu]$; every countable model of set theory with extender-based large cardinals has a covering end-extension to a model satisfying $V=L[\vec E]$; and every countable model of set theory with infinitely many Woodin cardinals and a measurable above has a covering end-extension to a model of $\ZF+\text{AD}+{V=L(\mathbb{R})}$. And there are many further interesting cases, where one end-extends a given countable model of set theory to a model satisfying a theory holding in a sufficiently strong submodel of the original model.
\goodbreak

\section{The \texorpdfstring{$\Sigma_1$}{Sigma\textunderscore{}1}-definable universal finite sequence for end-extensions}

Let us now prove the main theorem. We restate it here for the benefit of the reader.

\begin{main}
For any computably axiomatizable theory $\ZFbar$ extending \ZF, there is a $\Sigma_1$-definable finite sequence
$$a_0,\ a_1,\ \ldots,\ a_n$$
with the following properties:
\begin{enumerate}
\item $\ZF$ proves that the sequence is finite.
\item In any transitive model $M$ of $\ZFbar$, the sequence is empty.
\item If $M$ is a countable model of $\ZFbar$ in which the sequence is $s$ and $t \in M$ is any finite sequence (in the sense of $M$) extending $s$, then there is a covering end-extension of $M$ to a model $N \models \ZFbar$ in which the sequence is exactly $t$, and in which every set of $M$ is countable.
\item Indeed, for statements (2) and (3), it suffices merely that $M\satisfies\ZF$ and end-extends a submodel $W\satisfies\ZFbar$ of height at least $(\omega_1^L)^M$.
\end{enumerate}
\end{main}

\begin{proof}
We undertake an analogue of the universal finite set construction of~\cite{HamkinsWoodin:The-universal-finite-set}, a construction which is itself a set-theoretic generalization of the universal algorithm~\cite{Woodin2011:A-potential-subtlety-concerning-the-distinction-between-determinism-and-nondeterminism, Blanck2017:Dissertation:Contributions-to-the-metamathematics-of-arithmetic, BlanckEnayat2017:Marginalia-on-a-theorem-of-Woodin, Hamkins:The-modal-logic-of-arithmetic-potentialism}, but combining it with the ideas of Hamkins's proof of the Barwise extension theorem in section~\ref{Section.A-new-proof-of-the-Barwise-extension-theorem}.

Fix a computable enumeration of the theory $\ZFbar$, to be used in all the models of set theory in which we refer to this theory, and let $\ZFbar_k$ be the finite theory consisting of the first $k$-many axioms appearing in this enumeration.

As explained in~\cite{Hamkins:The-modal-logic-of-arithmetic-potentialism}, it will suffice for us to establish a weaker extension property, the \emph{adding-one} extension property, which asserts that if $s$ is the sequence defined in $M$ then for any object $a$ in $M$ there is a covering end-extension $N$ in which the defined sequence is $s^\smallfrown a$. That is, the adding-one extension property allows one to extend the sequence by appending exactly one new object on the end, rather than a finite sequence. By iterating this property, of course, one can eventually append any standard-finite sequence, but such an inductive argument, it turns out, cannot establish the full extension property for nonstandard-finite extensions, for there are sequences with the adding-one extension property that do not have the full extension property.\footnote{For example, the main process $A$ sequence we define in this proof has the adding-one extension property, but not the arbitrary extension property, because the total length of the sequence will be bounded by $k_0$, a bound which is known once the first stage is successful.} Nevertheless, from any sequence with the adding-one extension property, one can define a new sequence with the arbitrary extension property. For example, one could simply concatenate all the finite sequences that appear on the given adding-one sequence. In this way, by adding one object to the original sequence, we can add an arbitrary (possibly nonstandard) finite sequence to the derived concatenated sequence. Therefore, for the rest of this proof, we shall aim only for the adding-one extension property.

As in~\cite{HamkinsWoodin:The-universal-finite-set}, we shall describe two set-theoretic processes, $A$ and $B$. These processes are intended to be run inside, respectively, $\omega$-nonstandard models and $\omega$-standard models and will then be merged in a way that provides a single definition fulfilling the desired properties.

\textbf{Process $A$.} We start by describing process $A$, intended to be run inside $\omega$-nonstandard models as an internal process, using whatever nonstandard  natural numbers are to be found there. The process proceeds in a sequence of stages, placing one object onto the sequence at each successful stage. Stage $n$ succeeds and $a_n$ is defined, if there is a transitive set $m_n$, countable in $L$ and containing as elements all earlier $m_i$ for stages $i<n$, and a natural number $k_n$, smaller than all earlier $k_i$, such that $a_n \in m_n$ and the structure $\<m_n,\in>$ has no covering end-extension to a model $\<N,\in^N>$ satisfying $\ZFbar_{k_n}$, making every set in $m_n$ countable, and placing this very object $a_n$ onto its own $A$ sequence at stage $n$ as the last element. In slogan form: we place an object onto the sequence, if we find a countable transitive set having no covering end-extension in which we would have done so as the next and last element. This is altogether a $\Delta_1$-property of the data $(m_n,a_n,k_n)$, since the non-existence of such a model $N$ is a $\Pi^1_1$-assertion, whose truth can be verified by the ordinal ranking of the canonically associated well-founded tree. In particular, if there is such an end-extension, then there is one in $L$ and countable in $L$. The process officially accepts and uses the triple $(m_n,a_n,k_n)$, whose witnesses for this property are $L$-least. In other words, we accept the first triple to be verified in the $L$ hierarchy, and use this triple to define $a_n$. The map $n\mapsto (m_n,k_n,a_n)$ is accordingly $\Sigma_1$-definable.

Although the definition may at first appear circular---we define $a_n$, after all, by reference to the definition of the process $A$ sequence inside $N$---one may use the \Godel--Carnap fixed point lemma to find a definition $\psi(n,a)$ for the map $n\mapsto a_n$ that solves this recursion, allowing $\psi$ as described to refer to its own \Godel\ code $\gcode{\psi}$. The same method is used in~\cite{HamkinsWoodin:The-universal-finite-set}, and one should view this as analogous to the common use of the Kleene recursion theorem in computability-theoretic arguments, including its use in the universal algorithm~\cite{Woodin2011:A-potential-subtlety-concerning-the-distinction-between-determinism-and-nondeterminism, Hamkins:The-modal-logic-of-arithmetic-potentialism}.

Since the natural numbers $k_n$ are descending, there will be only finitely many successful stages, and so the sequence will be finite.

We claim that in the relevant models, every $k_n$ arising from a successful stage is nonstandard; in particular, $\omega$-standard models will have no successful process $A$ stages. To see this, consider any countable model of set theory $M\satisfies\ZF$, which end-extends a submodel $W\satisfies\ZFbar$ of height at least $(\omega_1^L)^M$, and let $n$ be the last successful stage in $M$. Since the definition is $\Sigma_1$, By \Levy{} reflection it is absolute between $M$ and $W$. For any standard $k$, by reflection the theory $\ZFbar_k$ is true in a transitive set $N\in W$, large enough to include $m_n$ as a countable set and the witnesses for the successful stages. Consequently, $\<N,\in^W>\satisfies \ZFbar_k$ and thinks object $a_n$ was added at stage $n$ as the last object on the sequence. If $k_n \leq k$, this would violate the success of stage $n$ in $M$, since $m_n$ would have had such a covering end-extension after all. Therefore $k<k_n$ for every standard $k$, and so $k_n$ is nonstandard.

It remains to verify the extension property. Let $M$ be a countable model of \ZF\ set theory in which the sequence is $s$. (We don't actually need the submodel $W$ of $\ZFbar$ for this part of the argument, provided that the earlier $k_i$, if any, are nonstandard.) Let $n$ be the first unsuccessful stage. Let $k$ be nonstandard, but smaller than all previous $k_i$. Since stage $n$ did not succeed, it follows that for any transitive set $m$ countable in $L^M$ and containing the earlier $m_i$, and for any set $a\in m$, the structure $\<m,\in>^M$ does have a covering end-extension in $M$ to a model making every set in $m$ countable and satisfying $\ZFbar_k +$``object $a$ was placed onto the sequence at stage $n$, the last successful stage,'' since otherwise stage $n$ would have succeeded. Further, for a given $m$, the existence of such an end-extension is a $\Sigma_1$-property of the data $(m,a,k)$, and so we may find the extensions in $L^M$. Therefore, $L^M$ thinks that for every countable set $a$ and every countable transitive set $m$, there is a covering end-extension of $\<m,\in>$ to a model $\<N,\in^N>$ making every set in $m$ countable and satisfying $\ZFbar_k+$``object $a$ is placed onto the sequence at stage $n$, the last successful stage.'' This is a $\Pi^1_2$-assertion ($\forall a,m\,\exists N\ldots$), which by Shoenfield is therefore also true in $M$ itself. By the Keisler--Morley theorem, $M$ has an elementary end-extension $M^+$, and so the statement will also be true in $M^+$, as well as in all its forcing extensions, by Shoenfield absoluteness. Let $\theta$ be an ordinal of $M^+$ above $M$, and let $M^+[G]$ be a forcing extension making $V_\theta^{M^+}$ countable. Thus, for any object $a$ in $M$, we have a countable transitive set $m=V_\theta^{M^+}$ in $M^+[G]$, which by our observations must therefore have a covering end-extension $N$ in $M^+[G]$ making every set in $m$ countable and in which $\ZFbar_k$ holds, plus the assertion that process $A$ places object $a$ onto the sequence at stage $n$, the last successful stage. Since $N$ end-extends $V_\theta^{M^+}$, which end-extends $M$, and since $k$ is nonstandard, we have therefore found the desired covering end-extension of $M$ to a model of $\ZFbar$ making every set in $M$ countable and placing $a$ as the next and last element on the sequence.

\textbf{Process $B$.} We turn now to process $B$, intended to be run as an internal process inside $\omega$-standard models, using whatever (possibly nonstandard) ordinals are to be found there. The process will proceed in a sequence of stages, just as before, with each successful stage adding one object $a_n$ to the sequence. Stage $n$ is successful and $a_n$ is defined, if there is a transitive set $m_n$ countable in $L$ and a countable ordinal $\lambda_n$, with $m_i \in m_n$ and $\lambda_n < \lambda_i$ for all earlier stages $i<n$, and with $\lambda_n \in m_n$ and a set $a_n\in m_n$, such that the structure $\<m_n,\in>$ has no covering end-extension to a model $\<N,\in^N>$ making every set in $m_n$ countable and satisfying $\ZFbar+$``process $B$ places object $a_n$ on the sequence at stage $n$, the last successful stage,'' and furthermore, this property about $(m_n,a_n,\lambda_n)$, which has complexity $\Pi^1_1$, has rank $\lambda_n$ in the canonical representation of $\Pi^1_1$-assertions by well-founded trees. The acceptability of the data triple is a $\Sigma_1$-property, witnessed by the ranking function, and we officially accept the triple whose witness appears least in the $L$ order. So the map $n\mapsto a_n$ is $\Sigma_1$-definable.

Once again, the circularity in the definition can be removed by the \Godel--Carnap fixed-point lemma. Since the ordinals $\lambda_n$ are descending, there will be only finitely many successful stages, and so the sequence is finite.

We claim that the ordinals $\lambda_n$ in the relevant models are nonstandard. Suppose that $M$ is a countable model of $\ZF$ that end-extends a submodel $W$ of $\ZFbar$ of height at least $(\omega_1^L)^M$, and let $n$ be the last successful stage in $M$. Since the definition is $\Sigma_1$, it is absolute between $M$ and $W$, and so $W$ satisfies $\ZFbar$ plus the assertion that the object $a_n$ is placed onto the sequence at stage $n$, the last successful stage. So the statement asserting at stage $n$ that $m_n$ has no covering end-extension and so on is actually false, since $W$ itself is such an extension. So the tree representing the statement, which is determined the same in $M$ as for us in the set-theoretic background because $M$ is an $\omega$-model, is not actually well-founded. But $M$ thought it was well-founded with rank $\lambda_n$, and so $\lambda_n$ must be nonstandard.

Let us now prove the extension property for process $B$ in countable $\omega$-standard models $M\satisfies\ZF$ end-extending a submodel $W\satisfies \ZFbar$ of height at least $(\omega_1^L)^M$. Suppose the sequence is $s$ in $M$. Let $n$ be the first unsuccessful stage, and consider any set $a$ in $M$. By the Keisler--Morley theorem, we may find a countable elementary end-extension of $M$ to a model $M^+$. Let $\theta$ be an ordinal of $M^+$ above $M$, and let $M^+[G]$ be a forcing extension in which $m=V_\theta^{M^+}$ is made countable. If $\langle m,\in^{M^+}\rangle$ has a covering end-extension to a model of $\ZFbar$ in which every set of $m$ is made countable and object $a$ is placed onto the sequence at stage $n$ as the last successful stage, then we'd be done, since this would also serve as the desired end-extension of the original model $M$. So let us assume toward contradiction that there is no such end-extension of $\langle m,\in^{M^+}\rangle$. This is a $\Pi^1_1$-assertion about $m$ in $M^+[G]$, which therefore has some ordinal rank $\lambda$ there in the representation of $\Pi^1_1$-assertions by well-founded trees. Since the statement is really true (in the set-theoretic background of our universe), it follows that $\lambda$ must be in the well-founded part of $M^+[G]$. In particular, $\lambda<\lambda_i$ all $i<n$, since those ordinals are nonstandard. So $M^+[G]$ thinks that ``there is a countable transitive set $m$ containing all the earlier $m_i$ and the witnesses for the success of the $\Sigma_1$ definition at those earlier stages and an element $a\in m$, such that the assertion that $\<m,\in>$ has no covering end-extension to a model making every set in $m$ countable and satisfying $\ZFbar$ $+$ ``object $a$ is placed onto the sequence at stage $n$, the last successful stage'' has ordinal rank $\lambda$.'' This is a $\Sigma_1$-assertion about $\lambda$ and the countable set containing the data and witnesses for the earlier successful stages of the process, since it is witnessed by the countable transitive set $m$ and $a\in m$ and the ranking function of the tree that is canonically associated with the non-extension assertion about $m$.

If $\lambda$ happens to be countable in $L^{M^+}$, then all the parameters of the assertion are countable in $L^M$, and so it follows by \Levy\ reflection that the statement is true in $L^{M^+}$ and hence in $L^M$. So $L^M$ has a countable transitive set $m$, containing the witnesses for the earlier successful stages and an object $a\in m$, such that the assertion that $m$ has no covering end-extension to a model of $\ZFbar$ making every set in $m$ countable and in which $a$ is placed at stage $n$, the last stage, has rank $\lambda$. But in this case, stage $n$ would have succeeded, contradicting our assumption that it was the first unsuccessful stage.

So we may assume $\lambda$ is uncountable in $L^{M^+}$. In this case, since $\lambda$ was actually well-ordered, it follows that $L^M$ is well-founded beyond its $\omega_1$. This implies that there could have been no earlier successful stages in $M$, since the $\lambda_i$ must be nonstandard countable ordinals in $L^M$. Thus, $n=0$, or in other words, there is nothing yet on the sequence. Consider the statement ``there is a countable transitive set $m$ with an element $a\in m$, such that $m$ has no covering end-extension to a model of $\ZFbar$ making every set in $m$ countable, in which the sequence is $\<a>$, having exactly one successful stage.'' This is a $\Sigma^1_2$ sentence, which is true in $M^+[G]$ by our assumption about $m$, and so by Shoenfield absoluteness, it is true in $L^{M^+}$ and hence also in $L^M$. So $L^M$ thinks there is a countable transitive set $m$ with an element $a\in m$, such that $m$ has no covering end-extension to a model of $\ZFbar$ in which every set in $m$ is countable and the sequence is exactly $\<a>$. But in this case, there would have been a successful stage after all, contrary to what we proved earlier.

Altogether, therefore, we conclude that $V_\theta^{M^+}$ must have had the desired end-extension after all, and so we've verified the extension property of process $B$ for the relevant countable $\omega$-standard models of set theory.

{\bf Process C.} We shall now merge processes $A$ and $B$ into a single process $C$, providing a single uniform $\Sigma_1$-definition that will work in all the relevant countable models of set theory. Process $C$ proceeds in stages. At each successful stage, it will place one new object onto the sequence, either for an $A$-reason or for a $B$-reason. The $A$-reasons will involve data $(m_n,a_n,k_n)$, and the $B$-reasons will involve data $(m_n,a_n,\lambda_n)$, where we insist that $m_n$ is a transitive set countable in $L$ and $m_i \in m_n$ for the $m_i$ used for either reason at earlier stages $i<n$, that $a_n\in m_n$, and that $k_n<k_i$ for the earlier $A$-reason stages $i<n$, if this is an $A$-reason stage, and $\lambda_n<\lambda_i$ for the earlier $B$-reason stages, if this is a $B$-reason stage; once there has been a successful $A$-reason stage, then we proceed only with the $A$-reason stages, but the $m_i$ for the $A$-reasons must see all the data for the previously successful $B$-reason stages. In particular, each $B$-reason $\lambda_j$ should be an element of any later $A$-reason $m_i$. As before, the data is acceptable if $m_n$ has no covering end-extension to a model $\<N,\in^N>$ making every set in $m_n$ countable and satisfying the relevant theory (using $\ZFbar_{k_n}$ at $A$-reason stages and full $\ZFbar$ at $B$-reason stages) plus the assertion that this new process $C$ has exactly one new successful stage, placing object $a_n$ onto the process $C$ sequence, and furthermore, in the $B$-reason case, the assertion that there is no such $N$ has rank $\lambda_n$. The acceptability of such a data triple is a $\Sigma_1$-property, since it is verified for each reason type by the rankings of certain trees, as with processes $A$ and $B$ above. A stage is successful if any data is verified as acceptable for it, and we use the data and the reason type whose witness appears first in the $L$-order. This defines the process $C$ sequence $n\mapsto a_n$, which has complexity $\Sigma_1$.

We complete the argument by observing that our process $A$ and $B$ analysis goes through for process $C$. The sequence is finite since the $k_n$ and $\lambda_n$ can go down only finitely many times. At any $A$-reason stage, the number $k_n$ must be nonstandard, and at any $B$-reason stage, the ordinal $\lambda_n$ must be nonstandard. In any $\omega$-nonstandard model, we achieve the extension property for process $C$ for the same reasons as process $A$; and in any $\omega$-standard model, we achieve the extension property for process $C$ just as with process $B$.
\end{proof}

As an immediate corollary, let us see that we can get a universal finite sequence for extensions in the $L$-hierarchy, which was our original motivation for this project. If $M$ and $N$ are models of $V = L$, say that $N$ is an \emph{$L$-extension} of $M$ if $M$ is an initial segment in the $L_\alpha$-hierarchy for $N$. Compare this to the notion of a rank-extension, where $N$ is a rank-extension of $M$ if $M$ is an initial segment in the $V_\alpha$-hierarchy for $N$.

\begin{corollary}[Universal finite sequence for $L$-extensions]
There is a $\Sigma_1$-definable sequence
$$a_0, \ldots, a_n$$
with the following properties:
\begin{enumerate}
\item $\ZF$ proves the sequence is finite.
\item In any transitive model of $\ZF$, the sequence is empty.
\item If $M$ is a countable model of $\ZFC+V=L$ in which the sequence is $s$ and $t \in M$ is any finite sequence extending $s$, then there is an $L$-extension $N \satisfies \ZFC+V=L$ of $M$ in which every set in $M$ is countable and the sequence is exactly $t$.
\end{enumerate}
$$\begin{tikzpicture}[xscale=.6,scale=.4]
 \draw[thick] (0,0) -- (-2,4) -- node[below,scale=.7] {$M$} (2,4) -- cycle;
 \draw[thick] (0,0) to[out=105,in=-130] (-.1,1.5) node[circle,fill=black,scale=.2,label={[label distance=-5pt,scale=.7]below right:$s$}] {};
 \draw[thick,dotted,blue!75!black] (-.1,1.5) to[out=50,in=-85] (.25,2.75) node[circle,fill=blue!75!black,scale=.2,label={[label distance=-4pt,scale=.7]below left:$t$}] {};
\begin{scope}[shift={(10,0)}]
 \draw[thick,opacity=.35] (0,0) -- (-2,4) -- node[below,scale=.7] {$M$} (2,4) -- cycle;
 \draw[thick,blue!75!black] (0,0) -- (-3,6) -- node[below,scale=.7] {$N$} (3,6) -- cycle;
 \draw[thick,blue!75!black] (0,0) to[out=105,in=-130] (-.1,1.5) node[circle,fill opacity=.35, fill=black,scale=.2,label={[opacity=.35,label distance=-5pt,scale=.7]below right:$s$}] {} to[out=50,in=-85] (.25,2.75) node[circle,fill=blue!75!black,scale=.2,label={[label distance=-4pt,scale=.7]below left:$t$}] {};
\end{scope}
\end{tikzpicture}$$
\end{corollary}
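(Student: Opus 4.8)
The plan is to apply the main theorem with $\ZFbar := \ZFC + V = L$ and then to observe that, among models of $V=L$, the covering end-extensions furnished by the main theorem are automatically $L$-extensions. Applying the main theorem to this choice of $\ZFbar$ immediately yields a $\Sigma_1$-definable sequence for which $\ZF$ proves finiteness, giving statement (1) at once.

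For statement (2) I would appeal to part (4) of the main theorem rather than its bare statement (2), since the corollary asserts emptiness in every transitive model of $\ZF$, not merely in transitive models of $\ZFC+V=L$. Given a transitive $M\satisfies\ZF$, its constructible inner model $L^M = L_{\Ord^M}$ is a transitive submodel satisfying $\ZFbar$, and $M$ end-extends it (as $L^M$ is a transitive initial substructure); its height is $\Ord^M$, which exceeds $(\omega_1^L)^M$. Hence part (4) applies and the sequence is empty in $M$.

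For statement (3), given a countable $M\satisfies\ZFC+V=L$ with sequence $s$ and $t\in M$ extending $s$, part (3) of the main theorem produces a covering end-extension $N\satisfies\ZFbar=\ZFC+V=L$ in which the sequence is exactly $t$ and every set of $M$ is countable. It then remains to upgrade ``covering end-extension'' to ``$L$-extension,'' which is the key step. Let $\lambda$ be the least ordinal of $N$ lying above all ordinals of $M$; this exists because a covering set $m\in N$ bounds $\Ord^M$ in $\Ord^N$, while end-extension makes $\Ord^M$ an $\in^N$-initial segment of $\Ord^N$, so that $\Ord^M=\lambda$ as a subset of $N$. The crucial fact is that the constructible hierarchy is absolute for end-extensions: for every $\beta\in\Ord^M$ we have $L_\beta^M = L_\beta^N$, proved by induction on $\beta$ using that the end-extension adds no new elements to the sets $L_\beta^M\in M$, so that the structure $\<L_\beta^M,\in>$ is common to both models and the definable-powerset operation yields the same value at successor stages. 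From this one reads off $M = L_\lambda^N$: if $a\in M$ then $M\satisfies V=L$ gives $a\in^N L_\beta^N$ with $L_\beta^N\of L_\lambda^N$ for some $\beta\in\Ord^M$; conversely if $a\in^N L_\lambda^N$ then $a\in^N L_\beta^N = L_\beta^M\in M$ for some $\beta<^N\lambda$, whence $a\in M$ by end-extension. Thus $M$ is exactly the initial segment $L_\lambda$ of $N$, so $N$ is an $L$-extension of $M$, which completes statement (3).

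The main obstacle I anticipate is precisely this key step of statement (3), and within it the absoluteness $L_\beta^M = L_\beta^N$. The argument is routine but must be handled carefully at successor stages, where one verifies that $N$ sees no extra definable subsets of $L_\beta^M$ exactly because the end-extension hypothesis forbids new elements of old sets and satisfaction in $\<L_\beta^M,\in>$ is absolute; the covering hypothesis is then what guarantees that $\lambda$ exists, so that $\Ord^M$ is bounded in $N$ and $M$ is realized as the genuine initial segment $L_\lambda^N$.
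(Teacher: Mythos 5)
Your proposal follows the paper's proof exactly: the paper's own argument is the one-line observation that the corollary follows from the main theorem applied with $\ZFbar=\ZFC+V=L$, together with the observation that among models of $V=L$ an end-extension is the same thing as an $L$-extension. You supply the details of that observation, and you also correctly notice something the paper glosses over: since statement (2) of the corollary concerns arbitrary transitive models of $\ZF$ rather than of $\ZFC+V=L$, one must invoke part (4) of the main theorem with $W=L^M$, exactly as you do.

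One detail in your upgrade from covering end-extension to $L$-extension overreaches. You claim that the least ordinal $\lambda$ of $N$ lying above all ordinals of $M$ exists because $\Ord^M$ is a bounded, downward-closed subset of $\Ord^N$. But $\Ord^M$ is in general only an \emph{external} cut in the (possibly ill-founded) model $N$, and a bounded external cut need not have a supremum in $N$---just as the standard cut in an $\omega$-nonstandard model has no least upper bound. So one cannot conclude that $M=L_\lambda^N$ for an actual $\lambda\in\Ord^N$. This does not damage the proof: the intended reading of ``$L$-extension,'' in parallel with the paper's description of rank-extensions as being determined by a cut in the $V_\alpha$-hierarchy, is that $M$ is an initial segment (a cut) of the $L$-hierarchy of $N$, and this follows directly from your induction showing $L_\beta^M=L_\beta^N$ for all $\beta\in\Ord^M$, whence $M=\bigcup_{\beta\in\Ord^M}L_\beta^N$ with $\Ord^M$ downward closed in $\Ord^N$. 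That induction is the real content of the paper's ``observation,'' and your treatment of it is correct.
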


\begin{proof}
This follows from the main theorem using $\ZFbar = \ZFC+V=L$ and the observation that, by the absoluteness of the $L_\alpha$-hierarchy, among models of $V=L$ being an end-extension is equivalent to being an $L$-extension.
\end{proof}

In the context of models of arithmetic, the universal algorithm exhibited the extension property with respect to all models of arithmetic, not just the countable models. But in the context of set theory, we achieve the extension property of the universal finite sequence in the main theorem only for the countable models of set theory. This was because at certain points in the proof, we made key use of some results, such as the Keisler--Morley theorem, which hold for the countable models of set theory but not generally.

It is therefore natural to inquire whether this limitation can be removed. Can there be a $\Sigma_1$-definable universal finite sequence exhibiting the extension property for all models of set theory, including uncountable models? That would make the situation analogous with the universal algorithm for models of arithmetic.

Alas, the answer is no. If there are sufficiently well-founded uncountable models, then there can be no generalization of the universal finite sequence to uncountable models of set theory. Specifically, suppose that $M$ is model of set theory that is well-founded at least to true $\omega_1$. We claim that no end-extension of $M$ can have new $\Sigma_1$-facts, and in particular, the universal sequence cannot gain new elements in an end-extension. The reason is that if $N$ is an end-extension of $M$, then any $\Sigma_1$-fact true in $N$ is true in $L_{{\omega_1}^L}^N$ by \Levy\ absoluteness, and this latter model cannot exceed the true $L_{{\omega_1}^L}$, for then we would have collapsed the true $\omega_1$ to be countable. 

Meanwhile, the question would remain whether that is the only kind of exception.

\begin{question}
Does the main theorem generalize to the case of uncountable models of set theory, whose well-founded part does not include the true $\omega_1$?
\end{question}

\section{A refined version of the main theorem, with applications}

We should like now to prove a refined version of the main theorem, by isolating exactly the features one needs of the theory in order to undertake the proof of the main theorem.

One feature of the theory $\ZFbar$ that we used was that it satisfied a form of the reflection principle. It will turn out, however, that a weak form of reflection will suffice. Specifically, let us define that a theory $T$ is \emph{reflexive to transitive sets}, if for any finitely many axioms of $T$, the theory proves that those axioms hold cofinally in the transitive sets. The difference between this and full reflection is that here, we reflect only assertions of the theory, rather than arbitrary true assertions.

The theory \ZF\ and all its extensions are reflexive to transitive sets, of course, by the reflection principle. But also, the theory $\ZFCm+V=L$ is reflexive to transitive sets, where $\ZFCm$ is Zermelo--Fraenkel set theory without the power set axiom,\footnote{By $\ZFCm$, we mean here, as one should, the theory axiomatized with the separation and collection schemata rather than the replacement schema, and with the well-ordering theorem rather than the axiom of choice. These are not equivalent without the power set axiom, as proved by Zarach~\cite{Zarach1996:ReplacmentDoesNotImplyCollection,Zarach1982:Unions-of-ZFm-models};
see also~\cite{GitmanHamkinsJohnstone2016:WhatIsTheTheoryZFC-Powerset?}.} 
since one can reflect along the $L$-hierarchy. Meanwhile, \KP\ is not reflexive, nor is any finitely axiomatizable theory. In general, the reflexive property is weaker than the reflection theorem for $T$, since models of $\ZFCm$, for example, do not necessarily satisfy the reflection theorem, as proved in~\cite{FriedmanGitmanKanovei2018:A-model-of-AC-not-DC}, but nevertheless, the theory is reflexive to transitive sets, since if $M\satisfies\ZFCm$ and $a\in M$, then we can code $a$ with a set of ordinals $A\in M$ and consider $L[A]^M$, which is a model of $\ZFCm$ containing $a$ and having an $\Ord$-hierarchy, which is enough to prove the reflection theorem. So we get transitive models of the form $L_\alpha[A]$ satisfying any desired finite fragment of $\ZFCm$ and containing the set $a$.

Another consequence of \ZF\ that we used in the main argument was the characterization of $\Pi^1_1$-assertions via well-founded trees. According to~\cite[Lemma~V.1.8]{Simpson2009:SubsystemsOfSecondOrderArithmetic}, however, this characterization is provable in $\ACA_0$. In particular, it will be provable in any set theory extending \KP, and \KP\ is sufficient to provide ordinal rankings of those trees.

It follows that $\Pi^1_1$-assertions about reals are absolute between a model $M$ of $\KP$ and its end-extensions $N\satisfies\KP$, since if the statement is true in the smaller model $M$, then it has the ordinal ranking of the tree, which will still exist in the larger model, verifying the $\Pi^1_1$-statement there; and if the statement fails in the smaller model, then it has the counterexample real, which will still exist in the larger model.

The \Levy\ reflection theorem, asserting that $\Sigma_1$-assertions in set theory are absolute to $L_{\omega_1^L}$, follows from $\Pi^1_1$-absoluteness. The upward direction is immediate; conversely, if a $\Sigma_1$-assertion is true in $M$, then by \Lowenheim--Skolem there is a countable witness, and this amounts to the truth of a $\Sigma^1_1$-assertion, which is therefore absolute to $L_{{\omega_1}^L}^M$.

The Shoenfield absoluteness principle, however, the assertion that $\Sigma^1_2$-assertions about reals are absolute between models with the same countable ordinals, appears to be strictly stronger. Shoenfield absoluteness is provable in $\Pi^1_1\text{-CA}_0$, according to~\cite[theorem~VII.4.14]{Simpson2009:SubsystemsOfSecondOrderArithmetic}. Because $\Pi^1_1\text{-CA}_0$ is stronger than \KP, we shall need to impose an extra assumption on the theory $S$ concerning this.

In the proof of the main theorem, we had used the Keisler--Morley theorem, stating that every countable model $M$ of \ZF\ has an elementary end-extension $M^+$. A closer inspection of the proof, however, will reveal that a somewhat weaker property will suffice. Specifically, what we shall need in Theorem~\ref{Theorem.Universal-sequence-for-suitable-theories} is merely that for every countable model $M$ of the theory, there is a $\Sigma_1$-elementary covering end-extension $M^+$ satisfying the theory (or at least satisfying the Shoenfield absoluteness property).

While all extensions of \ZF\ have this property, it turns out that \KP\ does not. Specifically, $L_{\omega_1^\mathrm{CK}}$, the minimum transitive model of \KP\ plus the axiom of infinity, can have no such covering $\Sigma_1$-elementary end-extension (Ali Enayat proved this in email correspondence). Meanwhile, Kaufmann~\cite{Kaufmann1981.On-existence-of-sigma_n-end-extensions} shows that satisfying $\Sigma_n$-collection implies the existence of a $\Sigma_n$-elementary end-extension, for $n\geq 2$. An easy modification of his argument gives that we may assume the end-extension to be covering.

With the above discussion in mind, we make the following definition.

\begin{definition}\label{Definition.Suitable-theories}\rm
A pair of theories $S$ and $T$ are \emph{suitable} for the $\Sigma_1$-definable universal sequence, provided that theory $S$ has the following properties
\begin{enumerate}
  \item $S$ extends \KP.
  \item Countable models of $S$ satisfy $\Sigma^1_2$-absoluteness to $L$ and to their forcing extensions.
  \item Every countable model $M\satisfies S$ has a $\Sigma_1$-elementary covering end-extension $M^+$, which also satisfies the $\Sigma^1_2$-absoluteness to its $L$ and forcing extensions.
\end{enumerate}
and the theory $T$ has the following properties
\begin{enumerate}
  \item $T$ extends \KP.
  \item $T$ is computably enumerable.
  \item $T$ is reflexive to transitive sets.
\end{enumerate}
A theory $T$ is \emph{suitable} all by itself, if it has all the properties, making a suitable pair with itself.
\end{definition}

It is clear that \ZF, and indeed any computably enumerable extension of \ZF, is suitable. Our earlier discussion shows that $\ZFm$ also is suitable.

We can now state a version of the main theorem for these suitable theories.\goodbreak

\begin{theorem}\label{Theorem.Universal-sequence-for-suitable-theories}
Assume that theories $S$ and $T$ are suitable for the $\Sigma_1$-definable universal finite sequence, as defined above. Then there is a $\Sigma_1$-definable finite sequence
$$a_0,\ a_1,\ \ldots,\ a_n$$
with the following properties:
\begin{enumerate}
\item $S$ proves that the sequence is finite.
\item In any transitive model $M$ of $T$, the sequence is empty.
\item If $M$ is a countable model of $T$ in which the sequence is $s$ and $t \in M$ is any finite sequence (in the sense of $M$) extending $s$, then there is a covering end-extension of $M$ to a model $N \models T$ in which the sequence is exactly $t$, and in which every set in $M$ becomes countable in $N$.
\item Indeed, for statements (2) and (3), it suffices merely that $M\satisfies S$ and end-extends a submodel $W\satisfies T$ of height at least $(\omega_1^L)^M$. \qed
\end{enumerate}
\end{theorem}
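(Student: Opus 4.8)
The plan is to rerun the proof of the main theorem essentially verbatim, replacing each appeal to a special feature of $\ZF$ by the corresponding clause of Definition~\ref{Definition.Suitable-theories} and checking that nothing more than these abstract features was ever used. First I would reuse the constructions of processes $A$, $B$ and their merge $C$ without change, except that the finite theory fragments are now the fragments $T_k$ cut off from a fixed computable enumeration of $T$ (available by clause~(2) on $T$), that the ambient model is a model of $S$ rather than of $\ZF$, and that the attempted covering end-extensions target $T$. The $\Sigma_1$-definability of the maps $n\mapsto(m_n,a_n,k_n)$ and $n\mapsto(m_n,a_n,\lambda_n)$ rests only on the representation of $\Pi^1_1$ assertions by well-founded trees together with the existence of ordinal rankings of those trees; as noted in the discussion preceding Definition~\ref{Definition.Suitable-theories}, the former is provable already in $\ACA_0$ and the latter in $\KP$, so both are available in $S$ and in $T$ since each extends $\KP$. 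The \Godel--Carnap fixed-point lemma removes the apparent circularity exactly as before, and statement~(1) follows in $S$ from the fact that the $k_n$ and $\lambda_n$ descend only finitely often.

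For the nonstandardness lemmas underlying statements~(2) and~(4), I would replace the reflection theorem by the hypothesis that $T$ is reflexive to transitive sets. Suppose $M\satisfies S$ end-extends a submodel $W\satisfies T$ of height at least $(\omega_1^L)^M$, and let $n$ be the last successful stage of process $C$ in $M$. Since the defining formula is $\Sigma_1$ and both $M$ and $W$ extend $\KP$, the sequence is absolute between $M$ and $W$, so $W$ too sees $a_n$ placed at stage $n$. At an $A$-reason stage, reflexivity of $T$ inside $W$ produces, for each standard $k$, a transitive $N\in W$ satisfying $T_k$ and containing $m_n$ together with the witnesses for the earlier stages, with $m_n$ countable in $N$; this $N$ is a covering end-extension of $m_n$ placing $a_n$ at stage $n$, contradicting the success of the stage unless $k_n$ is nonstandard. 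At a $B$-reason stage the extension $W$ itself falsifies the relevant non-extension assertion, so its associated tree is genuinely ill-founded and the rank $\lambda_n$ computed in $M$ must be nonstandard. Statement~(2) then follows by the same nonstandardness argument carried out with $W=M$ inside a transitive $M\satisfies T$: reflexivity supplies the reflecting transitive sets within $M$ itself, and since a transitive model is $\omega$-standard and well-founded it can host no nonstandard $k_n$ or $\lambda_n$, hence no successful stage, so the sequence is empty.

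The extension property, statement~(3), is verified by the $A$-argument in $\omega$-nonstandard models and the $B$-argument in $\omega$-standard models, with the three analytic ingredients now supplied by suitability. In place of the Shoenfield absoluteness theorem I would use clause~(2) on $S$, the $\Sigma^1_2$-absoluteness of the relevant countable models to $L$ and to their forcing extensions; the \Levy{} reflection principle, that $\Sigma_1$ truths are witnessed in $L_{\omega_1^L}$, is available since it follows from the $\Pi^1_1$-absoluteness provable in $\KP$. In place of the Keisler--Morley theorem I would use clause~(3) on $S$: the countable model $M$ has a $\Sigma_1$-elementary covering end-extension $M^+$ that itself enjoys $\Sigma^1_2$-absoluteness to its $L$ and forcing extensions. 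As before one picks an ordinal $\theta$ of $M^+$ above $M$, collapses an appropriate transitive set above $M$ to be countable (using $V_\theta^{M^+}$ when $T$ proves power set, and otherwise a set of the form $L_\theta[A]^{M^+}$ coding a given parameter, which is precisely where the reflexivity of power-set-free theories is exploited), and locates the desired covering end-extension $N$ in $M^+[G]$. That $N$ end-extends $M$ with sequence $s\concat a$ is automatic: the acceptability conditions of the earlier stages are $\Pi^1_1$ and hence persist to the end-extension $N$, so $N$ reproduces $s$ and then places $a$ at stage $n$.

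The one place where genuinely new care is required---and the main obstacle---is the downgrade from the full elementarity of the Keisler--Morley extension to the mere $\Sigma_1$-elementarity of $M^+$. In the main theorem the closed $\Pi^1_2$ sentence asserting that every countable pair $(a,m)$ admits the desired covering end-extension was lifted from $M$ to $M^+$ by elementarity, and $\Sigma_1$-elementarity will \emph{not} move such a sentence upward, since ``stage $n$ is unsuccessful'' is only $\Pi_1$ over $M$. The fix is to stop transferring this sentence from $M$ and instead to secure the existence of $N$ inside $M^+[G]$ intrinsically, by the contradiction-and-rank analysis of process $B$: assuming no such $N$, its failure is a $\Pi^1_1$ assertion with an ordinal rank in $M^+[G]$, and reflecting the $\Sigma_1$ content of that rank into $L^{M^+}$ by $M^+$'s own \Levy{} and $\Sigma^1_2$-absoluteness yields a successful stage where there should be none. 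Verifying that this reflection goes through using only the three suitability clauses---and in particular that it is exactly the demand that $M^+$ carry its own $\Sigma^1_2$-absoluteness that licenses it---is the crux; everything else is a routine transcription of the proof of the main theorem.
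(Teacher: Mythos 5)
Your proposal is correct and follows essentially the same route as the paper's own proof: rerun the merged process with $T_k$-fragments, replace reflection by reflexivity to transitive sets, Keisler--Morley by the $\Sigma_1$-elementary covering end-extension of clause~(3), and Shoenfield by the postulated $\Sigma^1_2$-absoluteness. Even your identified crux is resolved the same way the paper resolves it (there phrased contrapositively: the $\Pi^1_2$ statement \emph{does} lift from $M$ to $M^+$, because its failure in $M^+$ would be a $\Pi^1_1$ fact witnessed by a ranking, hence a $\Sigma_1$ assertion with parameters in $M$ that would reflect down by $\Sigma_1$-elementarity), so no new ideas are missing.
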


This is proved just as the main theorem, as our modifications in Definition~\ref{Definition.Suitable-theories} exactly capture what was needed in the proof.

\begin{question}
Are there more generous notions of suitability, for which the conclusions of the main theorem can still be established?
\end{question}

We can drop the reflexive-to-transitive-sets requirement on the theory $T$, for example, if we are willing to have the universal extension property for the sequence only in $\omega$-standard models, because our process $B$ analysis did not require that reflexive property. But can we drop that requirement in any case?

Just as the main theorem had the Barwise extension theorem as an immediate consequence, we also get the following consequence of the generalized version of the theorem.

\begin{corollary} \label{Corollary.Suitable-resurrection}
 Assume $S$ and $T$ are suitable theories, in the sense of definition~\ref{Definition.Suitable-theories}. Then for every countable model $M\satisfies S$ end-extending a submodel $W\satisfies T$ of height at least $(\omega_1^L)^M$, there is a covering end-extension of $M$ to a model of $T$, and indeed, one in which every set of $M$ becomes countable.
\end{corollary}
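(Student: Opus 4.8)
The plan is to read this off directly from theorem~\ref{Theorem.Universal-sequence-for-suitable-theories}, in exactly the way the Barwise extension theorem follows from the main theorem. First I would apply that theorem to the given suitable pair $(S,T)$ to obtain the associated $\Sigma_1$-definable universal finite sequence $a_0,\ldots,a_n$. Working inside the given model $M\satisfies S$, let $s$ be the value that this sequence takes in $M$; by statement~(1) the theory $S$ proves the sequence finite, so $s$ is a genuine finite sequence (in the sense of $M$) and $s\in M$.

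The key step is then to apply the extension property to the \emph{trivial} extension. Since $M$ end-extends a submodel $W\satisfies T$ of height at least $(\omega_1^L)^M$, statement~(4) of the theorem certifies that statement~(3) is available for $M$. I would invoke statement~(3) with $t=s$, which is vacuously a finite sequence in $M$ extending $s$. This produces a covering end-extension $N\satisfies T$ of $M$ in which the sequence is still $s$ and---this is the feature we actually want---in which every set of $M$ becomes countable in $N$. That model $N$ is precisely the desired covering end-extension of $M$ to a model of $T$ with every set of $M$ countable.

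I do not expect any genuine obstacle: all the work is absorbed into theorem~\ref{Theorem.Universal-sequence-for-suitable-theories}, and the corollary is simply the instance in which one extends the sequence by nothing. The only point meriting care is the legitimacy of the choice $t=s$, namely that the universal sequence really does have a finite value in $M$ that $M$ recognizes---but this is exactly the content of statement~(1).
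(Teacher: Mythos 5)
Your proposal is correct and matches the paper's own treatment: the paper likewise presents this corollary as an immediate consequence of theorem~\ref{Theorem.Universal-sequence-for-suitable-theories}, statement (4), obtained by invoking the extension property trivially (the paper merely remarks that one could alternatively give a direct proof in the style of the Barwise argument of section~\ref{Section.A-new-proof-of-the-Barwise-extension-theorem}). The only microscopic caveat is whether ``extending $s$'' is read to permit $t=s$; if not, appending any single element of $M$ to $s$ works just as well, so nothing is at stake.
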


In slogan form, the corollary expresses what can be seen as a sweeping new multiverse modal principle: any statement true in an inner model is true again in an end-extension.

This corollary is an immediate consequence of Theorem~\ref{Theorem.Universal-sequence-for-suitable-theories} statement (4), but one can also give a direct proof of the same style as we gave in Section~\ref{Section.A-new-proof-of-the-Barwise-extension-theorem} for the Barwise extension theorem.

We are fascinated by some of the particular instances of this phenomenon. For example, if $M$ has a measurable cardinal $\kappa$, then we can collapse it by forcing, so that $M[G]$ thinks $\kappa$ is now a countable ordinal. But nevertheless, there will be an end-extension $N$ of $M[G]$ which again has a measurable cardinal---a new measurable cardinal is resurrected.

\begin{corollary}\label{Corollary.Applications-of-main-theorem}\ 
 \begin{enumerate}
  \item Every countable model $M$ of $\ZFC$ with a measurable cardinal has a covering end-extension to a model $N$ of $\ZFC+V=L[\mu]$.
  \item Every countable model $M$ of $\ZFC$ with extender-based large cardinals has a covering end-extension to a model $N$ satisfying $\ZFC+V=L[\vec E]$.
  \item Every countable model $M$ of $\ZFC$ with infinitely many Woodin cardinals and a measurable above has a covering end-extension to a model $N$ of $\ZF+\text{AD}+V=L(\mathbb{R})$.
  \item And in each case, we can furthermore arrange that every set of $M$ is countable in the extension model $N$. \qed
 \end{enumerate}
\end{corollary}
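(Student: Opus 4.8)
The plan is to derive all four statements uniformly from Corollary~\ref{Corollary.Suitable-resurrection} (equivalently, statement (4) of Theorem~\ref{Theorem.Universal-sequence-for-suitable-theories}), whose slogan is exactly what we want: any theory holding in a suitable inner model is resurrected in a covering end-extension that makes the original model countable. Thus for each item it suffices to exhibit a suitable pair $(S,T)$ together with an inner model $W\satisfies T$ of the given countable $M$, and then read off the conclusion. In every case I would take $S=\ZFC$, which supplies the $S$-side requirements of suitability since $\ZFC$ is suitable all by itself, being a computably enumerable extension of \ZF; the work is then entirely in choosing the target theory $T$ and checking that it has the $T$-side properties of Definition~\ref{Definition.Suitable-theories}.

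For item (1) I would take $T=\ZFC+V=L[\mu]$, for item (2) the theory $T=\ZFC+V=L[\vec E]$, and for item (3) the theory $T=\ZF+\AD+V=L(\R)$. Each of these extends \KP\ and is computably enumerable, so the only nontrivial requirement is that $T$ be reflexive to transitive sets. For the first two this goes exactly as for $\ZFCm+V=L$ in the discussion preceding Definition~\ref{Definition.Suitable-theories}: inside any model of $T$ one reflects a given finite fragment cofinally along the canonical fine-structural hierarchy, namely the $L_\alpha[\mu]$ levels, respectively the $L_\alpha[\vec E]$ levels, obtaining transitive sets modeling that fragment. For the $L(\R)$ case one reflects instead along the hierarchy $L_\alpha(\R)$; since $L(\R)\satisfies\ZF$, the ordinary reflection theorem applies there and yields club-many $\alpha$ with $L_\alpha(\R)\satisfies\AD+V=L(\R)$, which is precisely reflexivity to transitive sets.

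It remains to produce the inner model $W$ from the large-cardinal hypothesis on $M$, where I would invoke the standard inner-model and determinacy theory carried out inside the arbitrary countable $M$: a measurable cardinal in $M$ yields $W=L[\mu]^M\satisfies\ZFC+V=L[\mu]$ by Kunen's analysis; extender-based large cardinals yield a canonical extender model $W=L[\vec E]^M\satisfies V=L[\vec E]$; and infinitely many Woodin cardinals with a measurable above yield, by the Martin--Steel--Woodin determinacy transfer theorem, that $W=L(\R)^M\satisfies\ZF+\AD+V=L(\R)$. In each case $W$ is an inner model of $M$, hence a transitive submodel with $\Ord^W=\Ord^M$ which $M$ end-extends, so in particular $W$ has height at least $(\omega_1^L)^M$. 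Applying Corollary~\ref{Corollary.Suitable-resurrection} to the suitable pair $(S,T)$ with this $M$ and $W$ then delivers a covering end-extension $N\satisfies T$ in which every set of $M$ is countable, simultaneously establishing items (1)--(3) together with item (4).

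The main obstacle, aside from correctly citing the inner-model and determinacy black boxes, is the verification that each target $T$ is reflexive to transitive sets, and specifically the $\AD$ case: one must confirm that reflecting the determinacy axiom down the $L(\R)$-hierarchy genuinely produces transitive set models of $\AD$ rather than merely of some weaker local determinacy statement. Because reflection inside the \ZF-model $L(\R)$ matches the truth value of the single sentence $\AD$ in cofinally many $L_\alpha(\R)$, and $\AD$ holds in $L(\R)$ under the hypothesis, this goes through; but it is the one place where the abstract suitability framework must be checked by hand rather than simply quoted.
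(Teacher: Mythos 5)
Your proposal is correct and is essentially the argument the paper intends: the corollary is left without an explicit proof precisely because it is an immediate instance of Corollary~\ref{Corollary.Suitable-resurrection} (equivalently, statement~(4) of the main theorem) applied to the inner models $L[\mu]^M$, $L[\vec E]^M$, and $L(\R)^M$, each of which is a transitive submodel of full ordinal height and hence of height at least $(\omega_1^L)^M$. The only superfluous step is your hand-verification of reflexivity for the target theories: since each of $\ZFC+V=L[\mu]$, $\ZFC+V=L[\vec E]$, and $\ZF+\AD+V=L(\R)$ extends $\ZF$, reflexivity to transitive sets (indeed, suitability, and even direct applicability of the main theorem with $\ZFbar$ equal to the target theory) is automatic by the reflection theorem, as the paper notes.
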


By Theorem~\ref{Theorem.Universal-sequence-for-suitable-theories}, we can of course also control the $\Sigma_1$-definable universal finite sequence in such end-extensions.

\section{End-extensional potentialism}\label{Section.Potentialism}

We should like now to draw out the consequences of the main theorem for set-theoretic end-extensional potentialism. Consider the potentialist system consisting of the countable models of set theory, a multiverse of models of set theory, forming a Kripke model under the end-extension relation, where here we wish to allow the trivial end-extension $M \subseteq M$. (Let us consider the models of a fixed suitable theory $T$, such as any computably enumerable theory $\ZFbar$ extending \ZF.) In this potentialist system we may naturally interpret the modal operators, so $\possible\varphi$ is true in a model $M$, if there is an end-extension in which $\varphi$ is true; and $\necessary\varphi$ is true in $M$, if all end-extensions satisfy $\varphi$. This is therefore a version of potentialism---one of many in set theory---as described in~\cite{HamkinsLinnebo:Modal-logic-of-set-theoretic-potentialism}. Let us call it \emph{end-extensional potentialism}.

Linnebo and others~\cite{Linnebo:2013-PHS, LinneboShapiro2017:Actual-and-potential-infinity, HamkinsLinnebo:Modal-logic-of-set-theoretic-potentialism} have emphasized the contrast between height-potentialism and width-potentialism in the philosophy of set theory, where with height-potentialism the set-theoretic universe fragments can grow taller, perhaps adding new ordinals on top, while with width-potentialism, the universe grows outward, perhaps adding new subsets to old sets as with forcing.

End-extensional potentialism is naturally a form both of height-potentialism and width-potentialism, since the extensions of a model can extend the ordinals of that model to taller ordinals and also new subsets of old sets can appear in an end-extension.

\begin{theorem}\label{Theorem.Hierarchy-end-extensional-potentialism-is-S4}
The modal logic of end-extensional potentialism, for the countable models of any fixed suitable set theory, is exactly \theoryf{S4}.
\end{theorem}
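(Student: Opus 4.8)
The plan is to establish the two inclusions $\theoryf{S4}\subseteq\Lambda$ and $\Lambda\subseteq\theoryf{S4}$, where $\Lambda$ denotes the modal logic of the system, i.e. the set of modal assertions true under every substitution of set-theoretic sentences for the propositional variables, at the relevant worlds. The easy inclusion is soundness. The accessibility relation of the potentialist system is the end-extension relation on the countable models of the fixed suitable theory $T$, and this relation is reflexive (every model end-extends itself) and transitive (an end-extension of an end-extension is again an end-extension). Any reflexive, transitive Kripke frame validates all of \theoryf{S4}, so $\theoryf{S4}\subseteq\Lambda$; in particular the characteristic principles $\necessary\varphi\implies\varphi$ and $\necessary\varphi\implies\necessary\necessary\varphi$ hold throughout the system.

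For the reverse inclusion I would invoke the standard fact that \theoryf{S4} is sound and complete with respect to the class of finite rooted reflexive-transitive trees. Thus if a modal assertion $\varphi$ is not an \theoryf{S4}-theorem, there is such a finite tree $\mathcal{T}$, a root, and a propositional labeling of the nodes of $\mathcal{T}$ under which $\varphi$ fails at the root. It then suffices to \emph{realize} $\mathcal{T}$ inside the potentialist system: to produce an assignment of set-theoretic sentences to the propositional variables together with a countable model $M$ at which the induced modal valuation mirrors that of the root of $\mathcal{T}$, so that $\varphi$ fails at $M$, witnessing $\varphi\notin\Lambda$.

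The realization is carried out using the $\Sigma_1$-definable universal finite sequence of theorem~\ref{Theorem.Universal-sequence-for-suitable-theories}. The crucial features are total control of the end-extension structure together with genuine branching: from any countable model in which the sequence is $s$, one may pass to a covering end-extension in which it becomes any chosen finite extension $t\supseteq s$, while two extensions committing the sequence to incompatible values $t_0\perp t_1$ admit no common end-extension, since in such a model the $\Sigma_1$-definable sequence would have to equal both. I would fix an injective coding of the nodes of $\mathcal{T}$ by finite sequences, compatible with the tree order and the end-extension order, and assign to each propositional variable $p$ the assertion that the universal sequence currently codes a node at which $p$ holds under the labeling. Using the extension property of the main theorem to witness each $\possible$, and the upward absoluteness of the ($\Sigma_1$) coding assertions under end-extension to supply the persistence that $\necessary$ requires, one proves by induction on modal formulas a truth lemma: a modal subformula holds at a model coding node $v$ exactly when it is forced at $v$ in $\mathcal{T}$. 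Applying this to $\varphi$ at the root yields $\Lambda\subseteq\theoryf{S4}$.

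The main obstacle is the realization step, and within it the faithful reproduction of the tree's branching. Since finite trees need not be directed, the essential point is that the system must genuinely fail directedness in a controlled way; otherwise the logic would rise to \theoryf{S4.2}. This is precisely where the universal finite sequence is indispensable, in contrast with the forcing potentialism of Hamkins--Linnebo: whereas any two forcing extensions share a common extension, two end-extensions that have steered the universal sequence to incompatible finite values are permanently incompatible. Verifying the truth lemma thus reduces to checking that (i) theorem~\ref{Theorem.Universal-sequence-for-suitable-theories} delivers, from each model coding a node $v$, covering end-extensions coding precisely the $\mathcal{T}$-successors of $v$ and nothing incompatible with them, and (ii) the coding sentences are upward absolute, so that earlier commitments are remembered. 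Both follow the template used for the universal algorithm in arithmetic potentialism, but the bookkeeping of the labeling and the inductive valuation is where the real work lies.
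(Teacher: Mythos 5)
Your proposal is correct and follows essentially the same route as the paper: soundness from the reflexivity and transitivity of end-extension, and the converse inclusion via completeness of \theoryf{S4} for finite (pre-)trees, realized in the potentialist system by the railyard method of steering the universal finite sequence of theorem~\ref{Theorem.Universal-sequence-for-suitable-theories} along paths in the tree, with branching secured by the upward absoluteness of the $\Sigma_1$-definable sequence under end-extension. The paper's own proof is only a sketch deferring the details to the railyard construction in the arithmetic-potentialism setting, so your somewhat fuller account of the labeling and truth lemma is a faithful elaboration rather than a different argument.
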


This result is proved exactly as the analogous results for arithmetic potentialism~\cite{Hamkins:The-modal-logic-of-arithmetic-potentialism} and rank-extensional potentialism~\cite{HamkinsWoodin:The-universal-finite-set} using, respectively, the universal algorithm and the $\Sigma_2$-definable universal finite set. Specifically, see \cite[theorem~28]{Hamkins:The-modal-logic-of-arithmetic-potentialism} for full details. The point is, any potentialist system which admits a universal finite sequence---a definition for a finite sequence which can be arbitrarily extended as you extend in the potentialist system---will have \theoryf{S4} as its modal validities.

Let us highlight, however, the important fact that getting \theoryf{S4} as an upper bound for the modal validities in general needs to allow formulae to have a single natural number parameter (for the length of the universal finite sequence in the current world). As we discuss in Section~\ref{Section.Maximality-Principle}, if you consider only sentences then the modal validities may go beyond \theoryf{S4}.

We also find it natural to consider a related and somewhat more permissive potentialist system, the \emph{$\Delta_0$-elementary potentialist system}, which consists of the countable models of set theory, ordered by $\Delta_0$-elementary extension, rather than just end-extension. That is, $N$ extends $M$ in this potentialist system if $M$ is a $\Delta_0$-elementary submodel of $N$. Since end-extensions are $\Delta_0$-elementary, this is somewhat more permissive than end-extensional potentialism.

\begin{corollary}
The modal logic of $\Delta_0$-elementary potentialism, for the countable models of any fixed suitable set theory, is exactly \theoryf{S4}.
\end{corollary}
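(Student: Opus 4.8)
The plan is to run the very same railyard argument used in the proof of Theorem~\ref{Theorem.Hierarchy-end-extensional-potentialism-is-S4}, checking only that its two ingredients survive the passage to the more permissive $\Delta_0$-elementary accessibility relation. First, for the lower bound, observe that $\Delta_0$-elementary extension is reflexive (every model is a $\Delta_0$-elementary submodel of itself) and transitive (a composite of $\Delta_0$-elementary extensions is again one). A potentialist system whose accessibility relation is a reflexive transitive preorder validates every theorem of \theoryf{S4}, so the $\Delta_0$-elementary potentialist validities already contain \theoryf{S4}.

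For the upper bound, I would show the validities are contained in \theoryf{S4} by exhibiting, for each propositional modal assertion $\varphi\notin\theoryf{S4}$, a failing instance in the potentialist system. Since the finite pre-tree frames are complete for \theoryf{S4}, fix a finite pre-tree $\mathcal T$ together with a propositional Kripke valuation on it falsifying $\varphi$ at some node. As in the theorem, use the universal finite sequence to label each model $M$ with a node $\nu(M)\in\mathcal T$, by reading the sequence in $M$ as the description of a path through $\mathcal T$ that starts at the root and advances, whenever a subsequent sequence element names a node accessible from the current position, to that node; translate each propositional variable $p$ by the assertion that $\nu(M)$ forces $p$ in the chosen valuation. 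It then suffices to verify the two correspondence properties: (a) if $N$ is a $\Delta_0$-elementary extension of $M$, then $\nu(N)$ is accessible from $\nu(M)$ in $\mathcal T$; and (b) for every node $w$ accessible from $\nu(M)$ there is a $\Delta_0$-elementary extension $N$ with $\nu(N)=w$.

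The observation that makes (a) work in the broader system is that the universal sequence is $\Sigma_1$-definable and that $\Sigma_1$ truth is upward absolute to $\Delta_0$-elementary extensions: if $M\satisfies\exists x\,\psi(x)$ with $\psi$ a $\Delta_0$ formula, then a witness $a\in M$ has $N\satisfies\psi(a)$ by $\Delta_0$-elementarity, whence $N\satisfies\exists x\,\psi(x)$. In particular the $\Sigma_1$ assertion that a given object is placed on the sequence at a given stage transfers upward, so the sequence of $M$ is an initial segment of the sequence of $N$, and the path described in $N$ merely extends the path described in $M$ by further accessible steps. Hence $\nu(N)$ is accessible from $\nu(M)$ exactly as required, even when $N$ is not an end-extension. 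For (b), the extensions furnished by Theorem~\ref{Theorem.Universal-sequence-for-suitable-theories} that append the sequence elements naming the path from $\nu(M)$ to $w$ are end-extensions, and since every end-extension is $\Delta_0$-elementary, these witnesses are already available in the $\Delta_0$-elementary system. Thus truth in the Kripke model on $\mathcal T$ translates into potentialist truth, $\varphi$ fails in the potentialist system, and the validities lie within \theoryf{S4}.

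I expect the main obstacle to be precisely property (a): the $\Delta_0$-elementary relation admits many extensions that are not end-extensions, and a priori one such extension could move the simulated node off the tree, or backward, which would wreck the correspondence and break the translation. The upward absoluteness of $\Sigma_1$ truth---hence the monotone, initial-segment growth of the universal sequence along any $\Delta_0$-elementary extension---is exactly what rules this out, confining every $\Delta_0$-elementary step to a forward move along $\mathcal T$. Once (a) and (b) are in hand, the remainder is the routine railyard bookkeeping already carried out in~\cite{Hamkins:The-modal-logic-of-arithmetic-potentialism}.
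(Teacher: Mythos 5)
Your proposal is correct and follows essentially the same route as the paper: the needed witnessing extensions are end-extensions (hence $\Delta_0$-elementary), and upward absoluteness of $\Sigma_1$ truth along $\Delta_0$-elementary extensions guarantees the universal sequence grows monotonically, so the railyard labeling still works. The paper's own proof is just a compressed statement of these same two points.
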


\begin{proof}
The point is, the universal finite sequence for end-extensions is also a universal finite sequence for this potentialist system. The universal finite sequence for end-extensions gives us the needed $\Delta_0$-elementary extensions satisfying the new facts. And if $M$ is a $\Delta_0$-elementary submodel of $N$ then $N$ agrees with the $\Sigma_1$-theory of $M$, so the universal finite sequence as defined in $N$ must end-extend the sequence as defined in $M$.
\end{proof}

\section{The end-extensional maximality principle}\label{Section.Maximality-Principle}

In this section we introduce and investigate the end-extensional maximality principle, as well as the corresponding maximality principle for $\Delta_0$-elementary extensions of models of set theory. 

\begin{definition}\rm 
The \emph{maximality principle} is true at a model $M$ in a potentialist system, if $M\satisfies\possible\necessary\varphi\implies\varphi$ for any assertion $\varphi$ in the language of $M$. 
\end{definition}

Thus, the end-extensional maximality principle asserts that every possibly necessary statement is already true, in the potentialist system consisting of the countable models of set theory under end-extension, and similarly, the $\Delta_0$-elementary-extensional maximality principle makes this assertion with respect to $\Delta_0$-elementary extensions.

In these two cases, the main theorem of the article shows that we cannot allow parameters into the schema, even merely natural number parameters, because the assertion that the universal sequence is defined at $n$ is a possibly necessary statement that will not yet be true for large enough $n$. (Meanwhile, in various other potentialist systems, such as those considered in~\cite{HamkinsLinnebo:Modal-logic-of-set-theoretic-potentialism}, one can sometimes allow parameters into the schema, or certain kinds of parameters.) For this reason, it is also clear that no $\omega$-standard model of set theory can satisfy the maximality principle, since standard natural numbers are absolutely definable.

Let us begin by establishing that end-extensional and $\Delta_0$-elementary possibility for assertions in the language of set theory, in the context of $\omega$-nonstandard models, are equivalent to each other, and furthermore this kind of possibility is expressible as a first-order schema in the language of set theory. 

We found it worthwhile to aim for a somewhat general version of this result, and so let us say that a theory $T$ is \emph{very suitable}, if (i) $T$ is suitable; (ii) $T$ satisfies the Keisler-Morley theorem, meaning that every countable model of $T$ has a covering elementary end-extension; (iii) $T$ proves the Mostowski collapse lemma; and (iv) $T$ proves the reflection schema with countable parameters, namely, if $\varphi(a)$ holds of a hereditarily countable set $a$, then every set is contained in a transitive model of $\varphi(a)$.

This notion is stronger than the merely suitable theories, and perhaps the main interest is simply the theory \ZFC\ itself. In particular, it follows from well-known facts that \ZF\ is very suitable, as well as every computably enumerable extension of \ZF; and $\ZFm + V=L$ also is very suitable. On the other hand, work of Friedman, Gitman, and Kanovei~\cite{FriedmanGitmanKanovei2018:A-model-of-AC-not-DC} shows that $\ZFm$ is not very suitable.

\begin{theorem}[Characterization of end-extensional possibility]\label{Theorem.Characterization-of-LST-end-extension-possibility}
Consider a countable $\omega$-nonstandard model $\<M,\in^M>$ in the potentialist system of a fixed very suitable set theory $T$, such as \ZF\ or any c.e.~extension of \ZF. In particular $T$ is computably enumerable and accordingly let $T_k$ be the subtheory of $T$ consisting of the first $k$ axioms along a fixed enumeration. For any assertion $\varphi(a)$ in the language of set theory about a countable object $a \in M$, the following are equivalent:
\begin{enumerate}
  \item $M \satisfies \possible \varphi(a)$ in the end-extensional potentialist system of the countable models of $T$. That is, $M$ has an end-extension satisfying $T$ and $\varphi(a)$.
  \item $M \satisfies \possible \varphi(a)$ in the $\Delta_0$-elementary potentialist system. That is, $M$ has a $\Delta_0$-elementary extension satisfying $T$ and $\varphi(a)$.
  \item $M$ thinks of every countable transitive set $m \ni a$ and every standard number $k$ that the structure $\<m,\in^M>$ end-extends to a model $\<N,\in^N>\satisfies T_k+\varphi(a)$.
  \item $M$ thinks of every real $x$ and every standard natural number $k$ that there is an $\omega$-model of $T_k + \varphi(a)$ which contains $x$.
  \item (For sentences) $\varphi$ is consistent with $T$ plus the $\Sigma_1$-theory of $M$
\end{enumerate}
\end{theorem}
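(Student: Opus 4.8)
The plan is to show all five conditions equivalent to~(1), routing the hard content through the internal, first-order conditions~(3) and~(4) and treating the remaining links as soft absoluteness and $\Sigma_1$-preservation arguments. The easy external links come first. Condition $(1)\implies(2)$ is immediate, since every end-extension is $\Delta_0$-elementary. For the sentence case, both $(1)\implies(5)$ and $(2)\implies(5)$ follow from the upward absoluteness of $\Sigma_1$ assertions: any $\Delta_0$-elementary extension of $M$ — in particular any end-extension — satisfies every $\Sigma_1$ sentence true in $M$, so a model of $T+\varphi$ extending $M$ already witnesses that $T+\varphi$ is consistent with the $\Sigma_1$-theory of $M$.

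Next I would establish the internalization $(1)\implies(3)$. Given an end-extension $N\satisfies T+\varphi(a)$, fix a countable transitive $m\ni a$ in $M$ and a standard $k$, noting that $m$ and $a$ remain hereditarily countable in $N$. Inside $N$, the reflexivity of $T$ to transitive sets together with the reflection scheme with countable parameters (both furnished by very suitability) produces a transitive model of $T_k+\varphi(a)$ containing $m$, and passing to a countable elementary submodel and applying the Mostowski collapse — which fixes the transitive set $m$ — yields a countable transitive end-extension of $m$ satisfying $T_k+\varphi(a)$. The existence of such an end-extension is a $\Sigma^1_1$ fact about the real coding $m$, true in $N$; since $M$ and its end-extension $N$ are models of \KP\ with the same $\omega$, the $\Pi^1_1$/$\Sigma^1_1$ absoluteness established earlier in this section transfers it down to $M$, and as $m,k$ were arbitrary this is exactly~(3). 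The equivalence $(3)\iff(4)$ is then a coding lemma: a countable transitive $m$ is coded by a real, any $\omega$-model of $T_k+\varphi(a)$ extending \KP\ correctly decodes that real into a genuine transitive copy of $m$ which it end-extends, and conversely an end-extension of $m$ to such a model is an $\omega$-model containing the code.

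The heart of the proof is the realization step $(3)\implies(1)$ (equivalently $(4)\implies(1)$), which is the Barwise-style construction of Section~\ref{Section.A-new-proof-of-the-Barwise-extension-theorem}. Because $M$ is $\omega$-nonstandard and~(3) holds for every standard $k$, overspill delivers a nonstandard $k^\ast$ for which $M$ still believes that every countable transitive $m\ni a$ end-extends to a model of $T_{k^\ast}+\varphi(a)$; since $k^\ast$ is nonstandard, $T_{k^\ast}$ includes all of $T$. I would then apply the Keisler--Morley theorem (available by very suitability) to obtain a covering elementary end-extension $M^+$, take a covering set $m=V_\theta^{M^+}$ for $\theta$ above $M$, and force to make $m$ countable in $M^+[G]$. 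The relevant $\Pi^1_2$ instance ``$m$ end-extends to a model of $T_{k^\ast}+\varphi(a)$'' passes up to $M^+$ by elementarity and to $M^+[G]$ by the Shoenfield ($\Sigma^1_2$) absoluteness guaranteed by suitability, so inside $M^+[G]$ there is an end-extension of $m$ to a model $N\satisfies T_{k^\ast}+\varphi(a)$. Since $N$ end-extends $m$, which covers and end-extends $M$, this $N$ end-extends $M$ and satisfies $T+\varphi(a)$, giving~(1).

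It remains to return from~(5) — and hence from~(2), via $(2)\implies(5)$ above — to~(1), closing the cycle. Here the $\omega$-nonstandardness is used a second time: the external consistency of $T+\Sigma_1\text{-theory}(M)+\varphi$ is converted, by an arithmetized-completeness argument carried out inside $M$ together with overspill on the length of the finite consistency statements, into the internal existence of the $\omega$-models demanded by~(4), after which $(4)\implies(1)$ finishes. This conversion of external consistency-with-$\Sigma_1(M)$ into the internal $\omega$-model data consumed by the construction is, together with the realization step $(3)\implies(1)$, where I expect the real difficulty to lie; both rest on overspill in the $\omega$-nonstandard model and on the suitability hypotheses (Keisler--Morley, reflection to transitive sets, the Mostowski collapse, and Shoenfield absoluteness). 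The resulting implication $(2)\implies(1)$ is the substantive payoff, recording that for $\omega$-nonstandard models $\Delta_0$-elementary possibility coincides with end-extensional possibility.
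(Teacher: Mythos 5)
Your skeleton agrees with the paper on the parts you route through $(3)$: the implication $(1)\Rightarrow(2)$, the coding equivalence $(3)\Leftrightarrow(4)$, the $\Sigma_1$-preservation argument for $(1)\Rightarrow(5)$, and above all the realization step $(3)\Rightarrow(1)$ (overspill to a nonstandard $k$, Keisler--Morley, collapse the covering set, Shoenfield) are exactly the paper's proof. But there is a genuine gap in how you close the cycle at $(2)$. Your only exit from $(2)$ is $(2)\Rightarrow(5)$, and $(5)$ is explicitly restricted to sentences; so for an assertion $\varphi(a)$ with a countable parameter $a$ --- the case the theorem is actually stated for --- condition $(2)$ is left as a one-way consequence of $(1)$ and is never shown equivalent to anything. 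The paper instead proves $(2)\Rightarrow(3)$ directly: if $(3)$ failed, $M$ would contain a ranking function witnessing the $\Pi^1_1$ fact that some countable transitive $m\ni a$ has no end-extension to a model of $T_k+\varphi(a)$; that ranking function is a $\Sigma_1$ witness and persists into any $\Delta_0$-elementary extension $N\models T+\varphi(a)$; extending $N$ further by Keisler--Morley to a covering extension with covering set $c$ and applying the reflection scheme with the countable parameter $a$ inside $N$ yields a transitive model of $T_k+\varphi(a)$ end-extending $c$, hence $m$, a contradiction inside $N$. Note that your $(1)\Rightarrow(3)$ argument does not transfer verbatim to $(2)\Rightarrow(3)$: pulling a $\Sigma^1_1$ fact about ``the real coding $m$'' downward from a mere $\Delta_0$-elementary extension is delicate, since such extensions can add new natural numbers and new elements to old sets, so the associated tree need not be the same object in both models; this is why the paper runs the argument by contradiction entirely inside $N$.

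The second problem is your $(5)\Rightarrow(4)$ step. As sketched --- arithmetized completeness inside $M$ plus overspill on the lengths of finite consistency statements --- it would fail at the first move: the external consistency of the finite fragments of $T+\varphi$ plus the $\Sigma_1$-theory of $M$ is a true $\Pi^0_1$ fact, and true $\Pi^0_1$ facts need not hold inside an $\omega$-nonstandard $M$, which may well contain a nonstandard proof of a contradiction from $T_k+\varphi$. So the consistency statements you intend to overspill are not available internally without a further argument; and even granting internal consistency, arithmetized completeness yields internal models that are neither $\omega$-models nor guaranteed to contain a prescribed real $x$, which is what $(4)$ demands. The paper's $(5)\Rightarrow(3)$ avoids all of this by arguing externally: take an actual model $N$ of $T+\varphi$ plus the $\Sigma_1$-theory of $M$; if $(3)$ failed in $M$, then the existence of a countable transitive set together with a ranking function witnessing its non-extendability to a model of $T_k+\varphi$ is a $\Sigma_1$ sentence true in $M$, hence true in $N$, contradicting the reflection scheme in $N$. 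You would need to replace your internalization step with an argument of this external kind (or first prove $(5)\Rightarrow(3)$ and then invoke $(3)\Leftrightarrow(4)$).
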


\begin{proof}
$(1 \Rightarrow 2)$ This is immediate, since every end-extension is $\Delta_0$-elementary.

$(2 \Rightarrow 3)$ Suppose toward a contradiction that $(2)$ holds and $(3)$ fails. Fix $k$ witnessing the failure and fix countable transitive $m \in M$ so that $M$ thinks $m$ has no end-extension to a model of $T_k + \varphi(a)$. This is a $\Pi^1_1$-fact about $m$ and $a$, so $M$ has a ranking function witnessing that the tree corresponding to the $\Pi^1_1$-fact is well-founded. This ranking function will remain a ranking function in any $\Delta_0$-elementary extension, so any such extension must agree that $m$ has no such end-extension. By $(2)$, let $N$ be a $\Delta_0$-elementary extension of $M$ in which $T+\varphi(a)$ holds. Since the Keisler--Morley theorem holds for $T$, we may assume without loss that $N$ is a covering extension, with $c \in N$ so that $N \models x \in c$ for each $x \in M$. By the very suitable reflection schema, $c$ has an end-extension in $N$ to a model of $T_k + \varphi(a)$. This end-extension of $c$ is also an end-extension of $m$, and so $N$ thinks that $m$ has an end-extension to a model of $T_k + \varphi(a)$, a contradiction.

$(3 \Rightarrow 1)$ By overspill there is a nonstandard $k$ so that $M$ thinks every countable transitive set end-extends to a model of $T_k + \varphi(a)$. By the Keisler--Morley theorem for $T$, let $M^+$ be a countable elementary end-extension of $M$ with a covering set $m$, and let $M^+[G]$ be a forcing extension which collapses $m$ to be countable. The assertion that every countable transitive set end-extends to a model of $T_k + \varphi(a)$ is $\Pi^1_2$, and thus by Shoenfield absoluteness we can transfer its truth in $M^+$ to $M^+[G]$. So $M^+[G]$ sees an end-extension of $m$ satisfying $T_k + \varphi(a)$. This end-extension witnesses that $M$ satisfies $\possible \varphi(a)$.

$(3 \Leftrightarrow 4)$ This is a standard fact, using the Mostowski collapse lemma to know that an arbitrary countable transitive set can be coded by a real. Note that this equivalence does not need $M$ to be $\omega$-nonstandard nor most of the assumptions on $T$.

$(1 \Rightarrow 5)$ The $\Sigma_1$-theory of a model is preserved by going to end-extensions, so if $\varphi$ holds in some end-extension of $M$ then it must be consistent with the $\Sigma_1$-theory of $M$.

$(5 \Rightarrow 3)$ Let $N$ be a model of $T + \varphi$ plus the $\Sigma_1$-theory of $M$. By the reflection principle satisfied by $T$ we get that $N$ thinks that every countable transitive set is contained in a model of $T_k + \varphi$, for any standard $k$. Suppose toward a contradiction that $(3)$ fails, so for some standard $k$, the model $M$ thinks there is a countable transitive set $m$ with no end-extension to a model of $T_k + \varphi$. As in the argument for $(1 \Rightarrow 2)$, this is a $\Pi^1_1$-fact about $m$ and so its truth in $M$ is witnessed by a certain well-founded tree with a ranking function to the ordinals. The assertion that there is such $m$ thus appears in the $\Sigma_1$-theory of $M$. So $N$ must think there is a countable transitive set with no end-extension to a model of $T_k + \varphi$, a contradiction.
\end{proof}

It follows as a corollary that if $M\satisfies\necessary\varphi(a)$, either with end-extensional or $\Delta_0$-elementary-extensional potentialism, then there is a concrete reason for this, namely, there is some standard $k$ and some countable transitive set $m\ni a$ in $M$ such that $M$ thinks $m$ has no end-extension to a model of $T_k+\neg\varphi(a)$. This is a $\Sigma^1_2$-assertion in the parameter $a$. 

It is natural to inquire whether the parameter $a$ of Theorem~\ref{Theorem.Characterization-of-LST-end-extension-possibility} could be uncountable in $M$. The answer is no. In general, neither $(1 \Rightarrow 3)$ nor $(2 \Rightarrow 3)$ holds if $a$ is uncountable in $M$. To see this, consider $a = \omega_1^M$. Then it is possible---in either potentialist system---that $a$ be made countable. However, no $n \in M$ which end-extends $a$ can think that $a$ is countable.

For the rest of this discussion, fix a very suitable theory $T$. We say that a collection $S$ of $\Sigma_1$-sentences is a \emph{maximal $\Sigma_1$-theory over $T$} if $T + S$ is consistent and $S$ is maximal among the sets of $\Sigma_1$-sentences consistent with $T$. One can easily construct a maximal $\Sigma_1$-theory by enumerating the $\Sigma_1$-sentences and adding them one at a time, so long as the resulting theory is consistent with $T$. The same argument shows that every collection of $\Sigma_1$-sentences consistent with $T$ is contained in a maximal $\Sigma_1$-theory.

An $\omega$-standard model $M$ of $T$, of course, can never have a maximal $\Sigma_1$-theory, because assertions like ``the $\Sigma_1$-definable universal finite sequence has length at least $n$'' are $\Sigma_1$-sentences for standard $n$. For the same reason, the universal finite sequence in any model of $T$ with a maximal $\Sigma_1$-theory will necessarily have nonstandard length.

\begin{theorem}\label{Theorem.End-extensional-maximality-principle}
The following are equivalent for any very suitable theory $T$ and any countable $M \satisfies T$. 
\begin{enumerate}
\item $M$ satisfies the end-extensional maximality principle.
\item $M$ satisfies the $\Delta_0$-elementary-extensional maximality principle.
\item The $\Sigma_1$-theory of $M$ is a maximal $\Sigma_1$-theory over $T$.
\end{enumerate}
\end{theorem}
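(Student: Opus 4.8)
The plan is to reduce everything to a single clean characterization of possibly-necessary sentences and then read off maximality of the $\Sigma_1$-theory. Throughout I restrict $\varphi$ to sentences, since the surrounding discussion shows the scheme cannot tolerate parameters. First I would dispose of the $\omega$-standard case: if $M$ is $\omega$-standard then all three conditions fail. Indeed (3) fails because, as already noted, an $\omega$-standard model cannot have a maximal $\Sigma_1$-theory; and (1) and (2) both fail by taking $\varphi$ to be the $\Sigma_1$ assertion ``the universal finite sequence has length at least $\ell+1$,'' where $\ell$ is its current (standard) length in $M$. By the extension property of the main theorem this $\varphi$ is possible, and being $\Sigma_1$ it persists upward, so it is possibly necessary in either potentialist system, yet false in $M$. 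Hence we may assume $M$ is $\omega$-nonstandard, and then so is every extension of it in either system (end-extensions fix $\omega$, and $\Delta_0$-elementary extensions retain the old nonstandard naturals), so Theorem~\ref{Theorem.Characterization-of-LST-end-extension-possibility} applies to $M$ and to all its extensions.

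The technical heart is the following lemma, which I would prove for both potentialist systems at once: for $\omega$-nonstandard $M\satisfies T$ and a sentence $\varphi$, $M\satisfies\possible\necessary\varphi$ if and only if there is a $\Sigma_1$-sentence $\sigma$ with $T\vdash\sigma\to\varphi$ and $M\satisfies\possible\sigma$. For the forward direction, pick an extension $N\satisfies\necessary\varphi$; since $N$ is $\omega$-nonstandard, condition (5) of Theorem~\ref{Theorem.Characterization-of-LST-end-extension-possibility} (applied at $N$ to $\neg\varphi$) shows that $\necessary\varphi$ at $N$ amounts to $T+\Sigma_1\text{-Th}(N)\vdash\varphi$; compactness then yields finitely many $\Sigma_1$-sentences true in $N$ whose conjunction $\sigma$ satisfies $T\vdash\sigma\to\varphi$, and $N\satisfies\sigma$ witnesses $M\satisfies\possible\sigma$. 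For the converse, if $N\satisfies\sigma$ then $N\satisfies\varphi$, and since $\sigma$ is $\Sigma_1$ it persists to all further extensions of $N$, each of which therefore models $\varphi$; thus $N\satisfies\necessary\varphi$. Crucially, by Theorem~\ref{Theorem.Characterization-of-LST-end-extension-possibility} the predicate $M\satisfies\possible\sigma$ is the same in the end-extensional and $\Delta_0$-elementary systems and equals ``$\sigma$ is consistent with $T$ plus the $\Sigma_1$-theory of $M$.'' Consequently the right-hand side of the lemma does not depend on which system we use, which already yields the equivalence $(1)\Leftrightarrow(2)$.

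It then remains to match the lemma against maximality. For $(3)\Rightarrow(1),(2)$, assume $\Sigma_1\text{-Th}(M)$ is maximal over $T$ and suppose $M\satisfies\possible\necessary\varphi$. The lemma supplies a $\Sigma_1$-sentence $\sigma$ with $T\vdash\sigma\to\varphi$ that is consistent with $T+\Sigma_1\text{-Th}(M)$; maximality forces $\sigma\in\Sigma_1\text{-Th}(M)$, so $M\satisfies\sigma$ and hence $M\satisfies\varphi$, establishing the maximality principle. For the reverse implications I argue contrapositively: if $\Sigma_1\text{-Th}(M)$ is not maximal, fix a $\Sigma_1$-sentence $\sigma$ consistent with $T+\Sigma_1\text{-Th}(M)$ but with $M\not\satisfies\sigma$. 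Applying the lemma with $\varphi=\sigma$ (and the witness $\sigma$ itself, using $T\vdash\sigma\to\sigma$) gives $M\satisfies\possible\necessary\sigma$ while $M\not\satisfies\sigma$, so the maximality principle fails in both systems.

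I expect the main obstacle to be the forward direction of the lemma, specifically the passage from ``$N\satisfies\necessary\varphi$'' to a single $\Sigma_1$-sentence $\sigma$ over $T$. This step relies on being able to apply the possibility-characterization theorem not just at $M$ but at the extension $N$---which is exactly why it matters that $\omega$-nonstandardness is inherited by extensions---together with a routine compactness argument to collapse finitely many $\Sigma_1$-axioms into one. Everything else is bookkeeping around the characterization theorem and the upward persistence of $\Sigma_1$-truth.
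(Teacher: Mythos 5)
Your proposal is correct and follows essentially the same route as the paper: both reduce the whole theorem to theorem~\ref{Theorem.Characterization-of-LST-end-extension-possibility}, use the upward persistence of $\Sigma_1$ sentences to see that $\Sigma_1$ truths are necessary, and transfer a $\Sigma_1$ witness of $\necessary\varphi$ from the extension $N$ down to $M$ via maximality of the $\Sigma_1$-theory. The only substantive difference is how that witness is produced---you obtain a single $\Sigma_1$ sentence $\sigma$ with $T\vdash\sigma\to\varphi$ by compactness from condition (5), whereas the paper reads it off condition (4) as the $\Sigma_1$ assertion that the canonical $\Pi^1_1$ tree is ranked---and your packaging of this as an explicit lemma characterizing $\possible\necessary\varphi$ is a tidy reorganization rather than a different argument.
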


\begin{proof}
$(1 \Leftrightarrow 2)$ This holds because $M$ satisfies $\possible \varphi$ in one potentialist system if and only if it does in the other, by Theorem~\ref{Theorem.Characterization-of-LST-end-extension-possibility}.

$(1 \Rightarrow 3)$ Suppose $\varphi$ is a $\Sigma_1$-sentence consistent with the $\Sigma_1$-theory of $M$. By theorem~\ref{Theorem.Characterization-of-LST-end-extension-possibility} we get that $\possible \varphi$ holds at $M$. Because $\Sigma_1$-assertions must be necessary, $\possible \necessary \varphi$ holds at $M$, and so by the end-extensional maximality principle we get that $\varphi$ holds at $M$.

$(3 \Rightarrow 1)$ Suppose that $\possible \necessary \varphi$ holds at $M$, so there is an end-extension $N \satisfies T+\necessary \varphi$. By theorem~\ref{Theorem.Characterization-of-LST-end-extension-possibility}, there is a standard natural number $k$ so that $N$ thinks there is a real $x$ which is not contained in any $\omega$-model of $T_k + \neg\varphi$. The assertion that there is no model of $T_k + \neg\varphi$ containing $x$ is $\Pi^1_1$ in the parameter $x$, and so its truth in $N$ is witnessed by the existence of a certain canonically-associated well-founded tree. This is equivalent to a $\Sigma_1$-assertion. Because the $\Sigma_1$-theory of $M$ is maximal we therefore get that this is already true in $M$. So $\necessary \varphi$ holds at $M$, and in particular $\varphi$ holds at $M$.
\end{proof}

\begin{corollary}
Every countable model of $T$ has a $\Delta_0$-elementary extension---not necessarily an end-extension---to a model satisfying the end-extensional maximality principle, and therefore also the $\Delta_0$-elementary-extensional maximality principle.
\end{corollary}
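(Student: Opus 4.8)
The plan is to reduce the statement to a purely model-theoretic construction via Theorem~\ref{Theorem.End-extensional-maximality-principle}, which tells us that a countable $M\satisfies T$ satisfies the end-extensional maximality principle---equivalently, the $\Delta_0$-elementary-extensional one---exactly when its $\Sigma_1$-theory is a maximal $\Sigma_1$-theory over $T$. So it suffices, given a countable $M\satisfies T$, to produce a countable $\Delta_0$-elementary extension $N\satisfies T$ of $M$ whose $\Sigma_1$-theory is maximal over $T$. As a preliminary reduction, I would note that we may assume $M$ is $\omega$-nonstandard: otherwise replace $M$ by a countable $\omega$-nonstandard elementary extension (obtained by compactness, adjoining a constant forced to be a nonstandard natural number, together with downward \Lowenheim--Skolem). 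Such an extension is $\Delta_0$-elementary and preserves the full first-order theory, hence the $\Sigma_1$-theory; and since $\Delta_0$-elementary extensions compose, any $\Delta_0$-elementary extension of the new model is one of the original $M$.

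First I would fix, using the construction recorded just before Theorem~\ref{Theorem.End-extensional-maximality-principle}, a maximal $\Sigma_1$-theory $S$ over $T$ extending the $\Sigma_1$-theory of $M$. Next I would run a compactness argument in the language of set theory augmented by a constant for each element of $M$, considering the theory $\Gamma$ consisting of $T$, the sentences of $S$, and the full $\Delta_0$-diagram of $M$ (all $\Delta_0$ formulas with parameters from $M$, and their negations, that are true in $M$). Any model of $\Gamma$ is simultaneously a model of $T$, a $\Delta_0$-elementary extension of $M$, and a model of $S$. The crux is to show $\Gamma$ is consistent, and here Theorem~\ref{Theorem.Characterization-of-LST-end-extension-possibility} does the work. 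Given a finite fragment of $\Gamma$, let $\sigma$ be the conjunction of the finitely many chosen sentences of $S$, a $\Sigma_1$ sentence consistent with $T$ together with the $\Sigma_1$-theory of $M$ (since $S$ is consistent with $T$ and contains that theory). By clause~(5) of Theorem~\ref{Theorem.Characterization-of-LST-end-extension-possibility}, we obtain $M\satisfies\possible\sigma$, so $M$ has a genuine $\Delta_0$-elementary extension $N'\satisfies T+\sigma$; interpreting the finitely many constants of the fragment by their values in $M$, the $\Delta_0$-elementarity of $N'$ guarantees that $N'$ also satisfies the finitely many $\Delta_0$-diagram sentences, so $N'$ models the whole fragment. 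By compactness $\Gamma$ has a model $N^{*}$.

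Finally, I would apply downward \Lowenheim--Skolem to obtain a countable $N\prec N^{*}$ with $M\subseteq N$; then $M\prec_{\Delta_0} N$ (as $M\prec_{\Delta_0}N^{*}$ and $N\prec N^{*}$) and $N\satisfies T+S$. The $\Sigma_1$-theory of $N$ is exactly $S$: it contains $S$ since $N\satisfies S$, and if some $\Sigma_1$ sentence $\tau$ held in $N$ but lay outside $S$, then maximality of $S$ would give $T+S\proves\neg\tau$, contradicting $N\satisfies T+S+\tau$. Hence the $\Sigma_1$-theory of $N$ is maximal over $T$, so by Theorem~\ref{Theorem.End-extensional-maximality-principle} the model $N$ satisfies both the end-extensional and the $\Delta_0$-elementary-extensional maximality principles, and it is a $\Delta_0$-elementary extension of the original $M$ (composing with the preliminary extension, if one was taken), which in general is not an end-extension.

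The main obstacle is the finite-satisfiability step of the compactness argument: realizing a new $\Sigma_1$ sentence $\sigma$ together with prescribed $\Delta_0$ facts about \emph{specific} elements of $M$, rather than merely their existential closures. This is exactly what forces us into Theorem~\ref{Theorem.Characterization-of-LST-end-extension-possibility} and hence into the $\omega$-nonstandard reduction, since the equivalence of clause~(5) with the existence of a $\Delta_0$-elementary extension satisfying $T+\sigma$ relies on overspill and Shoenfield absoluteness, which require $M$ to be $\omega$-nonstandard.
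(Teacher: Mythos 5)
Your proof is correct, and its skeleton---reduce via Theorem~\ref{Theorem.End-extensional-maximality-principle} to producing a countable $\Delta_0$-elementary extension whose $\Sigma_1$-theory is maximal over $T$, obtained by a compactness argument on $T$ plus the $\Delta_0$-diagram of $M$ plus a maximal $\Sigma_1$-theory---is the same as the paper's. The genuine difference lies in how consistency of that combined theory is secured. The paper constructs the maximal $\Sigma_1$-theory \emph{on top of} $S_0$, the theory $T$ together with the $\Delta_0$-elementary diagram of $M$, adding $\Sigma_1$-sentences one at a time while preserving consistency with $S_0$; consistency is then automatic, no appeal to Theorem~\ref{Theorem.Characterization-of-LST-end-extension-possibility} is needed, and no preliminary reduction to $\omega$-nonstandard models is required. (One does then need to observe that the $\Sigma_1$-theory of the resulting model is maximal over $T$ alone and not merely over $S_0$; this holds because any finite piece of the diagram used to refute a candidate $\Sigma_1$-sentence can be replaced by its existential closure, which is itself a $\Sigma_1$-sentence in the model's theory.) You instead fix a maximal $\Sigma_1$-theory over $T$ first and must afterwards prove it compatible with the diagram, which is exactly what forces you into clause~(5) of Theorem~\ref{Theorem.Characterization-of-LST-end-extension-possibility} and the passage to an $\omega$-nonstandard elementary extension. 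Both routes work; the paper's ordering of the construction dissolves the ``main obstacle'' you identify at the small cost of the maximality-over-$T$ check, while your version makes the role of the characterization theorem explicit and is otherwise sound in all its details.
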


\begin{proof}
Let $M$ be a countable model of $T$ and let $S_0$ be $T$ plus the $\Delta_0$-elementary diagram of $M$. Extend $S_0$ to a maximal $\Sigma_1$-theory $S$. Then $S$ has a countable model, which must be a $\Delta_0$-elementary extension of $M$. And by theorem~\ref{Theorem.End-extensional-maximality-principle} any countable model of $S$ must satisfy the two maximality principles.
\end{proof}

If $M$ is $\omega$-standard, then this extension cannot be an end-extension, as any end-extension of $M$ must also be $\omega$-standard, and hence cannot have a maximal $\Sigma_1$-theory.

\begin{question}
Does every countable $\omega$-nonstandard model of \ZF\ end-extend to a model satisfying the end-extensional maximality principle? By theorem~\ref{Theorem.End-extensional-maximality-principle} this is equivalent to asking whether every countable $\omega$-nonstandard model of \ZF\ end-extends to a model with a maximal $\Sigma_1$-theory.
\end{question}

The corresponding question is also open for models of arithmetic, as discussed in \cite{Hamkins:The-modal-logic-of-arithmetic-potentialism}.


\bibliographystyle{alpha}
\bibliography{HamkinsBiblio,MathBiblio,WebPosts,local}

\end{document}